\numberwithin{equation}{section}
\newtheorem{theorem}{Theorem}[section]
\newtheorem{prop}[theorem]{Proposition}
\newtheorem{lemma}[theorem]{Lemma}
\newtheorem{corollary}[theorem]{Corollary}
\newtheorem{definition}[theorem]{Definition}
\theoremstyle{definition}
\newcommand{\Z}{\mathbb Z}
\newcommand{\E}{\mathbb E}
\newcommand{\R}{\mathbb R}
\newcommand{\N}{\mathbb N}
\newcommand{\outBd}{\partial_o}
\newcommand{\diam}{\textrm{diam}}
\newtheorem{remark}{Remark}[section]
\newcommand{\ssubset}{\subset\subset}
\newcommand{\Capa}{\text{Cap}}
\newcommand{\BCap}{\text{BCap}}
\newcommand{\EsC}{\text{Es}}
\newcommand{\KRW}{\mathbf k}
\newcommand{\SRW}{\mathbf s}
\newcommand{\BRW}{\mathbf b}
\newcommand{\Snake}{\mathcal S}
\newcommand{\bb}{\Gamma}
\newcommand{\pS}{\mathbf p}
\newcommand{\rS}{\mathbf r}
\newcommand{\qS}{\mathbf q}
\newcommand{\DeFine}{\doteq}
\newcommand{\dist}{\rho}
\newcommand{\Ball}{\mathcal C}
\newcommand{\BallE}{\mathcal B}
\newcommand{\Rad}{\text{Rad}}
\newcommand{\Hm}{\mathcal{H}}
\begin{document}
\title[On CBRW II]{On the critical branching random walk II: Branching capacity and branching recurrence}
\author{Qingsan Zhu}
\address{~Department of Mathematics, University of British
Columbia,
Vancouver, BC V6T 1Z2, Canada}
\email{qszhu@math.ubc.ca}
\date{}
\maketitle

\begin{abstract}
We continue our study of critical branching random walk and branching capacity. In this paper we introduce branching recurrence and branching transience and prove an analogous version of Wiener's Test.
\end{abstract}

\section{Introduction}
In the previous paper \cite{Z161}, we introduce branching capacity $\BCap(K)$ for every finite subset $K$ of $\Z^d$ ($d\geq 5$) and construct its relations with the visiting probability by a critical branching random walk:
\begin{equation}\label{lim-BCap}
\lim_{x\rightarrow \infty}\|x\|^{d-2}\cdot P(\Snake_x \text{ visits } K)= a_d\BCap(K),
\end{equation}
where $\Snake_x$ is the critical branching random walk starting from $x\in \Z^d$, $a_d=\frac{1}{2d^{(d-2)/2}\sqrt{\det Q}}\Gamma(\frac{d-2}{2})\pi^{-d/2}$, $\|\cdot\|$ is a norm corresponding to the jump distribution.

In the present paper, we establish a relation between branching capacity and branching recurrence, i.e. a new version of Wiener's Test. Let us first review the classical Wiener's Test. A subset (finite or infinite) $K\subset\Z^d$ is called recurrent if
$$
P(S_0(n)\in K \text{ infinitely often })=1,
$$
and transient if
$$
P(S_0(n)\in K \text{ infinitely often })=0,
$$
where $(S_0(n))_{n\in\N}$ is the random walk starting from $0$.
For the recurrence and transience of a set, Wiener's Test says that:\\
Suppose $K\subset \Z^d,d\geq3$ and let $K_n=\{a\in K: 2^n\leq |a|< 2^{n+1}\}$. Then,
$$
K \text{ is recurrent } \Leftrightarrow \sum_{n=1}^{\infty}\frac{\Capa(K_n)}{2^{n(d-2)}}=\infty.
$$

Inspired by this, we give the definition of branching recurrence and branching transience by using the incipient infinite branching random walk (also called the branching random walk conditioned on survival, see Section 2 for  the exact definition) instead of random walk. Then we construct the following version of Wiener's Test:
\begin{theorem}\label{MT}
Assume that the offspring distribution $\mu$ on $\N$ is nondegenerate, critical and with finite third moment, and that the jump distribution $\theta$ on $\Z^d$ ($d\geq 5$) is centered, with finite range and not supported on a strict subgroup of $\Z^d$. Then for any $K\subset \Z^d$, we have
$$
K \text{ is branching recurrent } \Leftrightarrow \sum_{n=1}^{\infty}\frac{\BCap(K_n)}{2^{n(d-4)}}=\infty.
$$
\end{theorem}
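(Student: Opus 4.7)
The plan is to exploit the immortal-spine (backbone) representation of the incipient infinite branching random walk: it can be generated by first sampling an infinite random-walk spine $(X_k)_{k\ge 0}$ with jump distribution $\theta$, and then attaching to each $X_k$ an independent critical BRW (with size-biased offspring at the spine). This decomposition turns the event ``IIBRW visits $K$'' into a two-level structure: a transient random walk that explores space, and i.i.d.\ critical-BRW probes whose visit probabilities are controlled by the sharp asymptotic \eqref{lim-BCap}.

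\textbf{The transient direction (sum converges $\Rightarrow$ transient).} I would apply a first moment / Borel--Cantelli bound. By \eqref{lim-BCap}, the expected number of indices $k$ for which the BRW attached to $X_k$ hits $K_n$ is at most
\[
C\,\BCap(K_n)\sum_{k\ge 0}\E\bigl[(\|X_k\|\vee 2^n)^{-(d-2)}\bigr].
\]
The random walk spends $O(2^{2n})$ steps in the ball of radius $2^{n+1}$, each contributing $O(2^{-n(d-2)})$; the tail outside that ball contributes at the same order by transience in $d\ge 5$. This yields $P(\text{IIBRW visits }K_n)\le C\,\BCap(K_n)/2^{n(d-4)}$, and Borel--Cantelli closes this half.

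\textbf{The recurrent direction (sum diverges $\Rightarrow$ recurrent).} This half calls for a second-moment/conditional Borel--Cantelli argument. A sharp version of the first-moment computation, using the explicit constant $a_d$ in \eqref{lim-BCap} together with Gaussian local-time estimates for the spine, provides a matching lower bound $P(\text{IIBRW visits }K_n)\ge c\,\BCap(K_n)/2^{n(d-4)}$. The goal is then a quasi-independence estimate
\[
P(\text{visit }K_m\text{ and }K_n)\le C\,P(\text{visit }K_m)\,P(\text{visit }K_n),\qquad m<n,
\]
obtained by applying the strong Markov property of the spine at its first entrance into the annulus around $K_n$: conditional on the entrance point, the continuation of the IIBRW is again an IIBRW started afresh, and its probability of subsequently visiting $K_m$ is bounded by \eqref{lim-BCap} applied to the nested scales.

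\textbf{Main obstacle.} The hardest step is precisely this cross-scale correlation bound. Unlike the simple random walk setting of the classical Wiener test, each spine vertex in the IIBRW carries an entire critical BRW cloud, so visits to a given $K_n$ may be produced either by the spine or by any of the attached clouds; a single long sojourn of the spine near $K_m$ can spawn many spatially correlated BRW visits, jeopardising the second moment. Taming this requires a quantitative form of \eqref{lim-BCap} that is uniform over starting points in the relevant annulus, together with branching Green-function estimates from \cite{Z161}, to ensure that contributions of the attached BRWs can be summed cleanly across annular scales.
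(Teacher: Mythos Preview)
Your overall architecture (first-moment Borel--Cantelli for the transient direction, second-moment/Paley--Zygmund for the recurrent direction) matches the paper, but the implementation of both halves differs substantially, and your proposed quasi-independence argument has a real gap.

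\textbf{What the paper does differently.} The paper never sums over spine steps against \eqref{lim-BCap}. Instead it proves and uses the direct estimate $\qS_A(x)\asymp \BCap(A)/\dist(x,A)^{d-4}$ for the infinite-snake hitting probability (Theorem~\ref{bd-infinite}); your first-moment computation is essentially a rederivation of the upper bound of that theorem. The paper also subdivides each annular shell into finitely many pieces $K_n^k$ of diameter $\le 2^n/32$ before applying this estimate; without that, the condition $\dist(x,A)\ge \epsilon\,\diam(A)$ needed for the $\asymp$ bounds can fail. For the quasi-independence, the paper does not invoke any strong Markov property of the spine. It proves a structural bound on $P(\Snake_0^\infty\text{ visits both }A\ \&\ B)$ by locating the \emph{first spine vertex whose attached bush meets }$A\cup B$, and splitting according to whether that bush hits only $A$, only $B$, or both (inequality \eqref{two2}). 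Each piece is then controlled by the convolved-sum inequalities of Lemma~\ref{CVV} together with the restriction lemmas; the finite third moment of $\mu$ is used solely to bound the ``bush hits both'' term (Lemma~\ref{3-moment}).

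\textbf{The gap in your argument.} Your proposed stopping time---first entrance of the spine into the annulus around $K_n$---does not decouple the two events. First, since $m<n$, the set $K_m$ lies \emph{closer} to the origin, so the spine typically passes near $K_m$ before your stopping time; the ``continuation visits $K_m$'' bound only handles one temporal ordering. Second, and more seriously, a single attached bush rooted at one spine vertex can hit both $K_m$ and $K_n$; this event is invisible to the spine's Markov structure and is precisely what forces the paper's tree-level decomposition and the third-moment hypothesis. Without an explicit mechanism to control $P(\Snake_z\text{ visits both }A\ \&\ B)$ for the attached (finite) snakes---which the paper obtains via \eqref{two1} and the Green-function convolution bounds---your second-moment estimate does not close.
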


In the previous paper \cite{Z161}, we give the following bounds for the branching capacity of low dimensional balls:
\begin{equation}
\BCap(B^m(r))\asymp \left\{
\begin{array}{ccc}
r^{d-4}&\text{ if }& m\geq d-3;\\
r^{d-4}/\log r &\text{ if }& m=d-4;\\
r^{m} &\text{ if }& m\leq d-5;\\
\end{array}
\right.
\end{equation}
where $B^m(r)=\{z=(z_1,0)\in\Z^m\times\Z^{d-m}=\Z^d: |z_1|\leq r\}$.

Therefore, if we let $M$ be the $(d-i)$-dimensional ($i=1,2,3,4$) linear subspace, i.e. $\{z=(z_1,z_2)\in\Z^{d-i}\times \Z^i:z_2=0\}$ in $Z^d,d\geq5)$, then by the result above and our main theorem, Theorem \ref{MT} and the monotonicity of branching capacity (see Proposition 4.1 in \cite{Z161}), one can see that $M$ is branching recurrent. By projecting to $\Z^i$, we get that for $\Z^i$ ($i\leq 4$), the projection of the branching random walk conditioned on survival will visit any vertex infinitely often, almost surely. Hence we can get the following result which generalizes the main result in \cite{BC12}:
\begin{corollary}
The incipient infinite branching random walk in $\Z^d$ ($d\leq 4$) almost surely visits any vertex infinitely often, provided that the offspring distribution has finite third moment and that the step distribution is centered, with finite range and not supported on a strict subgroup of $\Z^d$.
\end{corollary}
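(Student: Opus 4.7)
The plan is the one sketched by the author just before the statement: lift the $\Z^i$-valued branching random walk to a $\Z^d$-valued one with $d\geq 5$ and push the branching recurrence of a coordinate subspace back down through projection. Concretely, fix $i\in\{1,2,3,4\}$ and a step distribution $\theta$ on $\Z^i$ satisfying the hypotheses of the corollary, pick any $d\geq 5$, and choose a centered, finitely supported distribution $\eta$ on $\Z^{d-i}$ not supported on a strict subgroup. Set $\theta'\DeFine\eta\otimes\theta$ on $\Z^d=\Z^{d-i}\times\Z^i$; then $\theta'$ verifies the hypotheses of Theorem \ref{MT} in dimension $d$. The crucial observation is that if a BRW in $\Z^d$ with step $\theta'$ is built on top of a Galton--Watson tree $\Gamma$, then the projection $\pi:\Z^d\to\Z^i$ onto the second factor turns it into a BRW in $\Z^i$ with step $\theta$ on the same $\Gamma$; since survival depends only on $\Gamma$, the projection of the incipient infinite BRW in $\Z^d$ has the law of the incipient infinite BRW in $\Z^i$.

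Next, fix an arbitrary $v\in\Z^i$ and let $M_v\DeFine\Z^{d-i}\times\{v\}$. For all sufficiently large $n$ the annular slice $(M_v)_n=\{z\in M_v:2^n\leq |z|<2^{n+1}\}$ contains a $(d-i)$-dimensional ball of radius $2^{n-1}$, so by monotonicity of branching capacity (Proposition 4.1 of \cite{Z161}) together with the ball estimate recalled in the excerpt,
\[
\BCap\bigl((M_v)_n\bigr) \gtrsim \begin{cases} 2^{n(d-4)} & \text{if } i\leq 3,\\ 2^{n(d-4)}/n & \text{if } i=4. \end{cases}
\]
In either case $\sum_n \BCap((M_v)_n)/2^{n(d-4)}=\infty$, so Theorem \ref{MT} gives that $M_v$ is branching recurrent in $\Z^d$. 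Through the coupling above this means that the incipient infinite BRW in $\Z^i$ visits $v$ infinitely often almost surely, and a countable intersection over $v\in\Z^i$ completes the proof.

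The only genuinely nonmechanical step is the identification of the $\pi$-projection of the incipient infinite BRW in $\Z^d$ with the incipient infinite BRW in $\Z^i$: this is a short bookkeeping check that conditioning on survival is determined by the underlying genealogy $\Gamma$ and not by the spatial decoration. Everything else is a direct substitution into Theorem \ref{MT} and the low-dimensional ball estimates, and the containment-of-a-ball argument that lower-bounds $\BCap((M_v)_n)$.
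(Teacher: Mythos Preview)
Your argument is correct and follows essentially the same approach as the paper's sketch preceding the corollary: establish branching recurrence of the $(d-i)$-dimensional affine slab $M_v$ in $\Z^d$ via Theorem~\ref{MT} together with the ball estimates and monotonicity of branching capacity, then push the conclusion down through the coordinate projection. One small technical caveat: as written, $\theta'=\eta\otimes\theta$ need not generate all of $\Z^d$ even when $\eta$ and $\theta$ each generate their respective factors (for instance, if both are nearest-neighbor walks on $\Z$ then $\mathrm{supp}(\theta')$ lies in the strict sublattice $\{(a,b):a+b\text{ even}\}$); this is easily repaired by choosing $\eta$ with $0$ in its support, which forces $\{0\}\times\mathrm{supp}(\theta)\subseteq\mathrm{supp}(\theta')$ and hence $\langle\mathrm{supp}(\theta')\rangle=\Z^d$.
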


In the previous paper \cite{Z161} (Theorem 1.5), we show that branching capacities for different $\mu$'s are comparable. Hence, we can see that whether a set is branching recurrent or branching transient is somehow independent of the offspring distribution:

\begin{corollary}
Let $\theta$ be some fixed centered distribution on $\Z^d$ with finite range which is  not supported on a strict subgroup of $\Z^d$. Then for any $K\subseteq\Z^d$, if there exists one nondegenerate critical offspring distribution $\mu$ with finite third moment, such that $K$ is branching recurrent (corresponding to $\mu$ and $\theta$), then for any such offspring distribution, $K$ is branching recurrent (corresponding to that offspring distribution and $\theta$).
\end{corollary}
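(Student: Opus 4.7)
The proof of this corollary should be essentially a direct consequence of the main theorem (Wiener's test) combined with Theorem 1.5 from \cite{Z161}, which asserts that branching capacities corresponding to different valid offspring distributions are comparable up to multiplicative constants. The plan is therefore to reduce the question of branching recurrence, which a priori depends on $\mu$ through the definition of the incipient infinite branching random walk, to a question about the convergence of a series involving $\BCap(K_n)$, and then observe that comparability of capacities preserves this convergence/divergence.

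More concretely, let $\mu_1$ and $\mu_2$ be two nondegenerate critical offspring distributions with finite third moment, and fix the step distribution $\theta$ as in the statement. Write $\BCap_{\mu_i}$ for the branching capacity defined using $\mu_i$. First, I would invoke Theorem~1.5 of \cite{Z161} to obtain constants $0 < c \leq C < \infty$, depending only on $\mu_1$, $\mu_2$, $\theta$, such that
\begin{equation*}
  c\,\BCap_{\mu_2}(A) \;\leq\; \BCap_{\mu_1}(A) \;\leq\; C\,\BCap_{\mu_2}(A)
\end{equation*}
for every finite $A \subset \Z^d$. Since each dyadic annular piece $K_n = \{a \in K : 2^n \leq |a| < 2^{n+1}\}$ is finite, this applies term by term.

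Next, I would apply Theorem~\ref{MT} to both offspring distributions: $K$ is branching recurrent with respect to $\mu_i$ if and only if
\begin{equation*}
  \sum_{n=1}^{\infty} \frac{\BCap_{\mu_i}(K_n)}{2^{n(d-4)}} = \infty.
\end{equation*}
By the previous comparability, the two series (for $i=1$ and $i=2$) diverge simultaneously, so branching recurrence with respect to $\mu_1$ is equivalent to branching recurrence with respect to $\mu_2$, proving the corollary.

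Given this reduction, there is no real obstacle in the present paper; the entire content lies in the two ingredients already in hand, namely Theorem~\ref{MT} and Theorem~1.5 of \cite{Z161}. The only minor point to verify is that the hypotheses on $\mu$ and $\theta$ assumed in Theorem~\ref{MT} and in the capacity-comparison theorem match the hypotheses stated in the corollary — they do, as both require criticality, nondegeneracy, finite third moment of $\mu$, centeredness, finite range, and non-degeneracy of the lattice support of $\theta$. So the proof is a brief concatenation of these two inputs.
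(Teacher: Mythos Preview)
Your proposal is correct and matches the paper's own reasoning: the corollary is not given a separate proof but is explained in the sentence preceding it as an immediate consequence of Theorem~\ref{MT} together with the comparability of branching capacities for different $\mu$'s (Theorem~1.5 of \cite{Z161}). Your write-up simply spells out that one-line deduction in full detail.
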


This paper is organized as follows. In Section 2, we introduce our notations and collect some preliminary knowledge and some results in the previous paper. In Section 3, we introduce branching recurrence and branching transience and establish some basic properties. The remaining sections are devoted to prove Theorem \ref{MT}.

\section{Preliminaries.}
Let us specify the assumptions that will be in force throughout this paper. We always assume that $d\geq 5$ and
\begin{itemize}
\item $\mu$ is a distribution on $\N$ with mean one and finite variance $\sigma^2>0$;
\item $\theta$ is a distribution on $\Z^d$ with mean zero and finite range, which is not supported on a strict subgroup of $\Z^d$.
\end{itemize}
Note that for our main theorem, the Wiener's test, we need to assume further that $\mu$ has finite third moment.

Now we turn to our notations. For a set $K\subseteq\Z^d$, we write $|K|$ for its cardinality. We write $K\ssubset\Z^d$ to express that $K$ is a finite nonempty subset of $\Z^d$. For $x\in\Z^d$ (or $\R^d$), we denote by $|x|$ the Euclidean norm of $x$. We will mainly use the norm $\|\cdot\|$ corresponding the jump distribution $\theta$, i.e. $\|x\|=\sqrt{x\cdot Q^{-1}x}/\sqrt{d}$, where $Q$ is the covariance matrix of $\theta$. For convenience, we set $|0|=\|0\|=0.5$. We denote by $\diam(K)=\sup\{\| a-b\|:a, b\in K\}$, the diameter of $K$ and by $\Rad(K)=\sup\{\| a\| : a\in K\}$, the radius of $K$ respect to $0$. We write $\Ball(r)$ for the ball $\{z\in\Z^d: \| z\|\leq r\}$. For any subsets $A,B$ of $\Z^d$, we denote by $\dist(A,B)=\inf\{ \| x-y \|: x\in A, y\in B\}$ the distance between $A$ and $B$. When $A=\{x\}$ consists of just one point, we just write $\dist(x,B)$ instead.
For any path $\gamma:\{0,\dots,k\}\rightarrow \Z^d$, we let $|\gamma|$ stand for $k$, the length, i.e. the number of edges of $\gamma$ , $\widehat{\gamma}$ for $\gamma(k)$, the endpoint of $\gamma$ and $[\gamma]$ for $k+1$, the number of vertices of $\gamma$. Sometimes we just use a sequence of vertices to express a path. For example, we may write $(\gamma(0),\gamma(1),\dots, \gamma(k))$ for the path $\gamma$. For any $B\subset \Z^d$, we write $\gamma\subseteq B$ to express that all vertices of $\gamma$ except the starting point and the endpoint, lie inside $B$, i.e. $\gamma(i)\in B$ for any $1\leq i\leq k-1$. If the endpoint of a path $\gamma_1:\{0,\dots,|\gamma_1|\}\rightarrow \Z^d$ coincides with the starting point of another path $\gamma_2:\{0,\dots,|\gamma_2|\}\rightarrow \Z^d$, then we can define the composite of $\gamma_1$ and $\gamma_2$ by concatenating $\gamma_1$ and $\gamma_2$:
$$
\gamma_1\circ\gamma_2:\{0,\dots,|\gamma_1|+|\gamma_2|\}\rightarrow \Z^d,
$$
$$
\gamma_1\circ\gamma_2(i)=\left\{
\begin{array}{ll}
\gamma_1(i),&\text{ for } i\leq|\gamma_1|;\\
\gamma_2(i-|\gamma_1|),&\text{ for } i\geq|\gamma_1|.\\
\end{array}
\right.
$$

We now state our convention regarding constants. Throughout the text (unless otherwise specified), we use $C$ and $c$ to denote positive constants depending only on dimension $d$, the critical distribution $\mu$ and the jump distribution $\theta$, which may change from place to place. Dependence of constants on additional parameters will be made or stated explicit. For example, $C(\lambda)$ stands for a positive constant depending on $d,\mu,\theta, \lambda$. For functions $f(x)$ and $g(x)$, we write $f\sim g$ if $\lim_{x\rightarrow \infty}(f(x)/g(x))=1$. We write $f\preceq g$, respectively $f\succeq g$, if there exist constants $C$ such that, $f\leq Cg$, respectively $f\geq Cg$. We use $f\asymp g$ to express that $f\preceq g$ and $f\succeq g$. We write $f\ll g$ for that $\lim_{x\rightarrow \infty}(f(x)/g(x))=0$.

\subsection{Finite and infinite trees.} We are interested in rooted ordered trees (plane trees), in particular, Galton-Watson (GW) trees and its companions. Recall that $\mu=(\mu(i))_{i\in\N}$ is a critical distribution with finite variance $\sigma^2>0$. We exclude the trivial case that $\mu(1)=1$. Throughout this paper, $\mu$ will be fixed. Define another probability measure $\widetilde{\mu}$ on $\N$, call the \textbf{adjoint measure} of $\mu$ by setting $\widetilde{\mu}(i)=\sum_{j=i+1}^\infty \mu(j)$. Since $\mu$ has mean $1$, $\widetilde{\mu}$ is indeed a probability measure. The mean of $\widetilde{\mu}$ is $\sigma^2/2$. A Galton-Watson process with distribution $\mu$ is a process starting with one initial particle, with each particle having independently a random number of children due to $\mu$. The Galton-Watson tree is just the family tree of the Galton-Watson process, rooted at the initial particle. We simply write \text{$\mu$-GW tree} for the Galton-Watson tree with offspring distribution $\mu$.  If we just change the law of the number of children for the root, using $\widetilde{\mu}$ instead of $\mu$ (for other particles still use $\mu$), the new tree is called an \textbf{adjoint $\mu$-GW tree}. The \textbf{infinite $\mu$-GW tree} is constructed in the following way: start with a semi-infinite line of vertices, called the spine, and graft to the left of each vertex in the spine an independent adjoint $\mu$-GW tree, called a bush. The infinite $\mu$-GW tree is rooted at the first vertex of the spine. Here the left means that we assume every vertex in spine except the root is the youngest child (the latest in the Depth-First search order) of its parent. We also need to introduce the so-called \textbf{$\mu$-GW tree conditioned on survival}. Start with a semi-infinite path, called the spine, rooted at the starting point. For each vertex in the spine, with probability $\mu(i+j+1)$ ($i,j\in\N$), it has totally $i+j+1$ children, with exactly $i$ children elder than the child corresponding to the next vertex in the spine, and exactly $j$ children younger. For any vertex not in the spine, it has a random number of children due to $\mu$. The number of children for different vertices are independent. The random tree generated in this way is just the $\mu$-GW tree conditioned on survival. Each tree is ordered using the classical order according to Depth-First search starting from the root. Note that the subtree generated by the vertices of the spine and all vertices on the left of the spine of the $\mu$-GW tree conditioned on survival has the same distribution as the infinite $\mu$-GW tree.

\subsection{Tree-indexed random walk.} Now we introduce the random walk in $\Z^d$ with jump distribution $\theta$, indexed by a random plane tree $T$. First choose some $a\in\Z^d$ as the starting point. Conditionally on $T$ we assign independently to each edge of $T$ a random variable in $\Z^d$ according to $\theta$. Then we can uniquely define a function $\mathcal{S}_T: T\rightarrow \Z^d$, such that, for every vertex $v \in T$ (we also use $T$ for the set of all vertices of the tree $T$), $\mathcal{S}_T(v)-a$ is the sum of the variables of all edges belonging to the unique simple path from the root $o$ to the vertex $u$ (hence $\mathcal{S}_T(o)=a$). A plane tree $T$ together with this random function $\mathcal{S}_T$ is called $T$-indexed random walk starting from $a$. When $T$ is a $\mu$-GW tree, an adjoint $\mu$-GW tree, an infinite $\mu$-GW tree and a $\mu$-GW tree conditioned on survival respectively, we simply call the tree-indexed random walk a \textbf{snake}, an \textbf{adjoint snake}, an \textbf{infinite snake} and an \textbf{incipient infinite snake} respectively. We write $\Snake_x$, $\Snake'_x$, $\Snake^{\infty}_x$ and $\overline{\Snake}^\infty_x$ for a snake, an adjoint snake, an infinite snake, and an incipient infinite snake respectively, starting from $x\in \Z^d$. Note that a snake is just the branching random walk with offspring distribution $\mu$ and jump distribution $\theta$. For an infinite snake, or incipient infinite snake, the random walk indexed by its spine, called its backbone, is just a random walk with jump distribution $\theta$.  Note that all snakes here certainly depend on $\mu$ and $\theta$. But since $\mu$ and $\theta$ are fixed throughout this paper, we omit their dependence in the notation.

Notation: we write $\pS_A(x)$, $\rS_A(x)$, $\qS_A(x)$ and $\overline{\qS}_A(x)$ respectively, for the probability that $\Snake_x$, $\Snake'_x$, $\Snake^{\infty}_x$ and $\overline{\Snake}^\infty_x$ visits $A$.

\subsection{Random walk with killing.} Similar to \cite{Z161}, the main tool we use is the random walk with killing. Suppose that when the random walk is currently at position $x\in\Z^d$, then it is killed, i.e. jumps to a 'cemetery' state $\delta$, with probability $\KRW(x)$, where $\KRW:\Z^d\rightarrow [0,1]$ is a given function. In other words, the random walk with killing rate $\KRW(x)$ (and jump distribution $\theta$) is a Markov chain $\{X_n:n\geq0\}$ on $\Z^d\cup\{\delta\}$ with transition probabilities $p(\cdot,\cdot)$ given by: for $x,y\in\Z^d$,
\begin{equation*}
p(x,\delta)=\KRW(x), \quad p(\delta, \delta)=1, \quad p(x,y)=(1-\KRW(x))\theta(y-x).
\end{equation*}
For any path $\gamma:\{0,\dots, n\}\rightarrow \Z^d$ with length $n$, its probability weight $\BRW(\gamma)$ is defined to be the probability that the path consisting of the first $n$ steps for the random walk with killing starting from $\gamma(0)$ is $\gamma$. Equivalently,
\begin{equation}\label{def-b}
\BRW(\gamma)=\prod_{i=0}^{|\gamma|-1}(1-\KRW(\gamma(i)))\theta(\gamma(i+1)-\gamma(i))
=\SRW(\gamma)\prod_{i=0}^{|\gamma|-1}(1-\KRW(\gamma(i))),
\end{equation}
where $\SRW(\gamma)=\prod_{i=0}^{|\gamma|-1}\theta(\gamma(i+1)-\gamma(i))$ is the probability weight of $\gamma$ corresponding to the random walk with jump distribution $\theta$. In this paper, the killing function is always of the form $\KRW(x)=\KRW_A(x)=\rS_A(x)$ for some set $A\ssubset\Z^d$.

Now we can define the corresponding Green function for $x,y\in \Z^d$:
\begin{equation*}
G_A(x,y)=\sum_{i=0}^{\infty}P(S^{\KRW}_x(n)=y)=\sum_{\gamma:x\rightarrow y}\BRW(\gamma).
\end{equation*}
where $S^{\KRW}_x=(S^{\KRW}_x(n))_{n\in \N}$ is the random walk (with jump distribution $\theta$) starting from $x$, with killing function $\KRW_A$, and the last sum is over all paths from $x$ to $y$. Note that the subscript $A$ indicates that the killing function is $\KRW_A$. For $x\in \Z^d, K\subset \Z^d$, we write $G_A(x,K)$ for $\sum_{y\in K}G_A(x,y)$.

For any $B\subset \Z^d$ and $x, y\in \Z^d$, define the harmonic measure:
$$
\Hm^{B}_A(x,y)= \sum_{\gamma:x\rightarrow y, \gamma\subseteq B}\BRW(\gamma).
$$

Note that when the killing function $\KRW\equiv 0$, the random walk with this killing is just random walk without killing and we will omit the dependence of $\KRW$ in the notation and just write $\Hm^{B}(x,y)$ for example.

We will repeatedly use the following First-Visit Lemma. The idea is to decompose a path according to the first or last visit of a set.
\begin{lemma} \label{bd-1}
For any $B\subseteq \Z^d$ and $a\in B, b\notin B$, we have:
$$
G_A(a,b)=\sum_{z\in B^c}\Hm^{B}_A(a,z)G_\KRW(z,b)=\sum_{z\in B}G_A(a,z)\Hm^{B^c}_A(z,b);
$$
$$
G_A(b,a)=\sum_{z\in B}\Hm^{B^c}_A(b,z)G_\KRW(z,a)=\sum_{z\in  B^c}G_A(b,z)\Hm^{B}_A(z,a).
$$
\end{lemma}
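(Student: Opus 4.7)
The plan is to prove all four identities by the same template: identify a canonical vertex on each path from $a$ to $b$ (resp.\ from $b$ to $a$), split at that vertex, and use multiplicativity of $\BRW$ to identify the two factors as a harmonic measure and a Green function. Multiplicativity is immediate from the definition of $\BRW$, which is a product over the edges of the path: whenever the concatenation is defined, $\BRW(\gamma_1\circ\gamma_2)=\BRW(\gamma_1)\BRW(\gamma_2)$.

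For the first identity, I would decompose each path $\gamma:a\to b$ at its \emph{first exit} from $B$. Since $a\in B$ and $b\notin B$, there is a unique smallest index $i_0\geq 1$ with $z:=\gamma(i_0)\notin B$. Writing $\gamma=\gamma_1\circ\gamma_2$ with $|\gamma_1|=i_0$, minimality of $i_0$ forces every intermediate vertex of $\gamma_1$ to lie in $B$, so $\gamma_1$ is one of the paths counted in $\Hm^B_A(a,z)$, while $\gamma_2$ is an unrestricted path from $z$ to $b$, contributing to $G_A(z,b)$. Since the correspondence $\gamma\mapsto(z,\gamma_1,\gamma_2)$ is a bijection between paths $a\to b$ and triples $(z,\gamma_1,\gamma_2)$ with $z\in B^c$, $\gamma_1$ counted in $\Hm^B_A(a,z)$, and $\gamma_2$ any path $z\to b$, summing over $z\in B^c$ and applying multiplicativity yields the claim.

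For the second identity I would instead split at the \emph{last visit} to $B$: let $j_0$ be the largest index with $\gamma(j_0)\in B$ and set $z:=\gamma(j_0)$. Then the prefix is arbitrary (contributing $G_A(a,z)$) while every intermediate vertex of the suffix lies in $B^c$, so the suffix contributes $\Hm^{B^c}_A(z,b)$. The two identities for $G_A(b,a)$ follow from the same two decompositions applied to paths going the other direction: first entry into $B$ yields $\sum_{z\in B}\Hm^{B^c}_A(b,z)G_A(z,a)$, and last visit to $B^c$ yields $\sum_{z\in B^c}G_A(b,z)\Hm^B_A(z,a)$.

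I do not anticipate any substantive difficulty here; the argument is standard path surgery. The only points to double-check are the bijectivity of each splitting (immediate from uniqueness of the first-exit and last-visit indices, together with the fact that the trivial paths $i_0=0$ or $j_0=|\gamma|$ are handled by the convention that $\Hm^B_A(\cdot,\cdot)$ and $G_A(\cdot,\cdot)$ include length-zero paths appropriately) and the paper's convention that the constraint $\gamma\subseteq B$ exempts the two endpoints, which is precisely what allows the splitting vertex $z$ to lie on the ``wrong'' side of $B$ without invalidating either factor.
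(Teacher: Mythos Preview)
Your proposal is correct and is precisely the approach the paper has in mind: the paper does not actually write out a proof of this lemma, only the sentence ``The idea is to decompose a path according to the first or last visit of a set,'' and your first-exit/last-visit splittings together with the multiplicativity $\BRW(\gamma_1\circ\gamma_2)=\BRW(\gamma_1)\BRW(\gamma_2)$ (which follows directly from \eqref{def-b}) carry this out. Note also that the occurrences of $G_\KRW$ in the statement are just $G_A$ in the paper's notation (the subscript $A$ records that the killing function is $\KRW_A$), so your writing $G_A$ throughout is consistent.
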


\subsection{Some facts about random walk and the Green function.}
For $x\in\Z^d$, we write $S_x=(S_x(n))_{n\in\N}$ for the random walk with jump distribution $\theta$ starting from $S_x(0)=x$. The norm $\| \cdot \|$ corresponding to $\theta$ for every $x\in \Z^d$ is defined to be $\|x\|=\sqrt{x\cdot Q^{-1}x}/\sqrt{d}$, where $Q$ is the covariance matrix of $\theta$. Note that $\|x\|\asymp |x|$, especially, there exists $c>1$, such that $\Ball(c^{-1}n)\subseteq\BallE(n)\subseteq\Ball(cn)$, for any $n\geq1$. The Green function $g(x,y)$ is defined to be:
$$
g(x,y)=\sum_{n=0}^{\infty}P(S_x(n)=y)=\sum_{\gamma:x\rightarrow y}\SRW(\gamma).
$$
We write $g(x)$ for $g(0,x)$.

Our assumptions about the jump distribution $\theta$ guarantee the standard estimate for the Green function (see e.g. \cite{LL10}):
\begin{equation}\label{green}
g(x)\sim a_d\| x \|^{2-d};
\end{equation}
and
\begin{equation}\label{ngreen}
\sum_{n=0}^{\infty}(n+1)\cdot P(S_0(n)=x)=\sum_{\gamma:0\rightarrow x}[\gamma]\cdot\SRW(\gamma)\asymp \| x\| ^{4-d}\asymp |x|^{4-d}.
\end{equation}
where $a_d=\frac{\mathbf{\Gamma}((d-2)/2)}{2d^{(d-2)/2}\pi^{d/2}\sqrt{\det Q}}$.

The following lemma is natural from the perspective of Brownian motion, the scaling limit of random walk (for a sketch of proof see \cite{Z161}).
\begin{lemma}\label{pro-g1}
Let $U,V$ be two connected bounded open subset of $\R^d$ such that $\overline{U}\subseteq V$. Then there exists a $C=C(U,V)$ such that if $A_n=nU\cap\Z^d, B_n=nV\cap\Z^d$ then when $n$ is sufficiently large,
\begin{equation}\label{pro-b1}
\sum_{\gamma:x\rightarrow y, \gamma\subseteq B_n, |\gamma| \leq2n^2}\SRW(\gamma)\geq Cg(x,y), \text{ for any }x,y\in A_n
\end{equation}
\end{lemma}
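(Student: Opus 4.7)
The idea is to control, via two separate loss estimates, how much of the full Green sum $g(x,y)=\sum_{\gamma:x\to y}\SRW(\gamma)$ is killed by (a) truncating at length $2n^2$, and (b) further requiring the path to stay in $B_n$. I will show each loss is $\preceq n^{2-d}$, and conclude by a case split on $\|x-y\|$.

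For (a), write $g(x,y)=\sum_{k\geq 0} P(S_x(k)=y)$; the standard local CLT upper bound $P(S_x(k)=y)\preceq k^{-d/2}$, valid under our hypotheses on $\theta$, and $d\geq 5$ give $\sum_{k>2n^2}P(S_x(k)=y)\preceq n^{2-d}$. For (b), let $\tau=\inf\{k\geq 1:S_x(k)\notin B_n\}$; the strong Markov property at $\tau$ yields
\begin{equation*}
\sum_{\gamma:x\to y,\ \gamma\not\subseteq B_n}\SRW(\gamma)\ =\ E_x\bigl[g(S_x(\tau),y)\bigr]\ \leq\ \max_{z\in\partial_o B_n}g(z,y),
\end{equation*}
and $\overline{U}\subseteq V$ forces $\dist(A_n,B_n^c)\succeq n$, so \eqref{green} gives $g(z,y)\preceq n^{2-d}$ for $z\in\partial_o B_n$, $y\in A_n$. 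Combining,
\begin{equation*}
\sum_{\gamma:x\to y,\ \gamma\subseteq B_n,\ |\gamma|\leq 2n^2}\SRW(\gamma)\ \geq\ g(x,y)-C_0 n^{2-d}
\end{equation*}
for some $C_0=C_0(U,V)$.

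Two regimes finish the proof. If $\|x-y\|\leq\eps n$ for $\eps=\eps(U,V)$ chosen small enough, \eqref{green} forces $g(x,y)\geq 2C_0 n^{2-d}$, so the restricted sum is $\geq g(x,y)/2$. If $\|x-y\|\geq\eps n$, then $g(x,y)\asymp n^{2-d}$ and it suffices to show the restricted sum itself is $\succeq n^{2-d}$ uniformly in such $x,y$. Here Donsker's invariance principle enters: the rescaled walk $S_x(\lfloor n^2 t\rfloor)/n$ converges to a Brownian motion started at $x/n\in\overline U$, and the transition density of Brownian motion killed on leaving $V$ is uniformly bounded below by some $c(U,V)>0$ for pairs of points in $\overline U$ and $t\in[1,2]$. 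Transferring this to the walk via a coupling combined with the local CLT yields $P(S_x(k)=y,\ S_x(i)\in B_n\ \forall\, i\leq k)\succeq n^{-d}$ for each $k$ in a subinterval of $[n^2,2n^2]$ of length $\succeq n^2$; summing gives $\succeq n^{2-d}$.

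The main obstacle will be the uniform transfer in the distant case of the Brownian killed-heat-kernel lower bound to the random walk, which requires either a strong approximation on time scale $n^2$ or a local CLT for the walk killed on exiting $B_n$. The hypothesis $\overline U\subseteq V$ is used precisely here (and in the escape estimate above), guaranteeing a uniform margin of order $n$ between $A_n$ and $B_n^c$ that makes both the boundary corrections negligible and the killed transition density nondegenerate.
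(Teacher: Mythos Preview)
The paper does not actually prove this lemma here; it only says the result is ``natural from the perspective of Brownian motion, the scaling limit of random walk'' and refers to \cite{Z161} for a sketch. Your approach is therefore consistent with what the paper indicates: the core idea is the invariance principle/killed heat-kernel lower bound, which is exactly your Step~4.

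Your reduction via the two loss estimates (truncation in time, then restriction in space) and the case split on $\|x-y\|$ is clean and correct; the paper does not spell out any such decomposition. A couple of minor remarks: in the close case $\|x-y\|\leq\eps n$ you should note separately that for $\|x-y\|$ bounded (in particular $x=y$) one has $g(x,y)\geq c>0$ which dominates $2C_0 n^{2-d}$ for large $n$, while for $\|x-y\|$ moderate one uses $g(x,y)\succeq(\eps n)^{2-d}$ and $\eps^{2-d}\to\infty$ as $\eps\to 0$. In the distant case, the cleanest way to carry out the transfer you describe is a two-stage argument: first use Donsker to bring the walk into a small macroscopic ball around $y$ at time $\lfloor n^2\rfloor$ without leaving $B_n$ (probability bounded below by the killed Brownian heat-kernel lower bound on $\overline U\times\overline U\times[1,2]$, which is positive by connectedness of $V$ and compactness), and then use the ordinary local CLT plus a short-time confinement estimate to hit $y$ exactly within the next $n^2$ steps. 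You have correctly identified this as the only genuinely technical point, and it is standard.
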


\subsection{Some results in the previous paper.}
The key starting point in the previous paper is the following proposition:
\begin{prop}
\begin{equation}\label{p2}
\pS_A(x)=\sum_{\gamma:x\rightarrow A}\BRW(\gamma)=G_A(x,A).
\end{equation}
\end{prop}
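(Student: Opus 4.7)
My plan is to decompose the event $\{\Snake_x \text{ visits } A\}$ according to the \emph{first} tree-vertex (in depth-first, left-to-right order) whose image under $\Snake_x$ lies in $A$. If such a first-visit vertex $v$ exists, let $o = v_0, v_1, \ldots, v_k = v$ be its ancestral line in the tree, and let $\gamma = (z_0, \ldots, z_k)$ with $z_j = \Snake_x(v_j)$ be its image in $\Z^d$. Then $z_0 = x$, $z_k \in A$, $z_j \notin A$ for $j < k$, and the events $E_\gamma$ indexed over such paths partition $\{\Snake_x \text{ visits } A\}$, so $\pS_A(x) = \sum_\gamma P(E_\gamma)$.

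Next I would compute $P(E_\gamma)$ level by level along the ancestral line. Writing $h(z) \DeFine E_\theta[\pS_A(z+\theta)]$ for the probability that a single displaced $\mu$-GW subtree rooted at a child of a vertex at $z$ visits $A$, and conditioning at each $v_j$ ($j < k$) on the total offspring count $n$ and on the spine child being the $(L+1)$-th among $n$: the spine jump contributes $\theta(z_{j+1}-z_j)$, the $L$ subtrees to the \emph{left} of the spine child must avoid $A$ (each factor $1 - h(z_j)$), and the $n-1-L$ subtrees to the \emph{right} are unconstrained (they are explored after $v$ in DFS, integrating to $1$). This yields
\begin{equation*}
P(E_\gamma) = \prod_{j=0}^{k-1}\theta(z_{j+1}-z_j)\sum_{n\geq 1}\mu(n)\sum_{L=0}^{n-1}(1 - h(z_j))^L.
\end{equation*}

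The crucial identification is that the inner double sum equals $1 - \rS_A(z_j)$. Since the adjoint snake has $\widetilde{\mu}$-distributed offspring at the root (and $\mu$ below), one has $1 - \rS_A(z) = \sum_{i \geq 0}\widetilde{\mu}(i)(1 - h(z))^i$ for $z \notin A$; substituting $\widetilde{\mu}(i) = \sum_{n > i}\mu(n)$ and swapping the order of summation gives exactly $\sum_{n \geq 1}\mu(n)\sum_{L=0}^{n-1}(1 - h(z))^L$. Hence $P(E_\gamma) = \SRW(\gamma)\prod_{j=0}^{k-1}(1 - \rS_A(z_j)) = \BRW(\gamma)$.

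Finally, since $\rS_A(z) = 1$ trivially whenever $z \in A$ (the adjoint snake already sits at $z$), $\BRW(\gamma)$ vanishes on any path that meets $A$ strictly before its endpoint, so the sum over first-hit paths extends for free to the sum over \emph{all} paths from $x$ to $A$, giving $\pS_A(x) = \sum_{y \in A} \sum_{\gamma : x \to y}\BRW(\gamma) = G_A(x, A)$; the degenerate case $x \in A$ is immediate as the empty path contributes $1$. I expect the main obstacle to be step two: the ordered-tree bookkeeping — summing simultaneously over the offspring count $n$ and the spine-child index $L+1$, and verifying that this is precisely what converts $\mu$ into $\widetilde{\mu}$ — must be laid out with care, but once the identity with the adjoint distribution is in place, the rest is essentially mechanical.
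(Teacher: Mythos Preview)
Your argument is correct. The decomposition by the first DFS vertex landing in $A$, the level-by-level bookkeeping, and the identification
\[
\sum_{n\ge 1}\mu(n)\sum_{L=0}^{n-1}(1-h(z))^{L}
=\sum_{i\ge 0}\Bigl(\sum_{n>i}\mu(n)\Bigr)(1-h(z))^{i}
=\sum_{i\ge 0}\widetilde{\mu}(i)(1-h(z))^{i}
=1-\rS_A(z)
\]
are all sound, and the extension from first-hit paths to all paths via $\rS_A\equiv 1$ on $A$ is clean.

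As for comparison: the present paper does \emph{not} prove this proposition at all. It is quoted in Section~2.5 under the heading ``Some results in the previous paper'' and attributed to \cite{Z161}; no argument is given here. Your derivation is precisely the kind of computation the identity rests on (size-biasing the offspring at each spine vertex and recognising that the left-of-spine count is $\widetilde{\mu}$-distributed), so it is almost certainly the same approach as in \cite{Z161}, but within this paper there is nothing to compare against.
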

From Theorem 1.3 in \cite{Z161}, one can see that (by cutting $A$ into a large number, depending on $\epsilon$, pieces):
\begin{prop}
For any $\epsilon>0$ fixed, when $\dist(x,A)\geq \epsilon \diam (A)$, we have:
\begin{equation}\label{bd-pp}
P(\Snake_x \text{ visits } A)\asymp \frac{\BCap(A)}{(\dist(x,A))^{d-2}},
\end{equation}
Note that the constant in $\asymp$ depends also on $\epsilon$ (independent of $x, A$).
\end{prop}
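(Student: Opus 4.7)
I would deduce the proposition from Theorem~1.3 of \cite{Z161}, which supplies the pointwise asymptotic $\|x\|^{d-2}\pS_A(x)\to a_d\BCap(A)$ as $\|x\|\to\infty$. That result yields two-sided bounds automatically when $\dist(x,A)\gg\diam(A)$; the task is to cover the borderline regime $\dist(x,A)\asymp\diam(A)$. As suggested by the phrasing of the statement, the idea is to chop $A$ into a bounded number of sufficiently small pieces so that, relative to each piece, $x$ is far away in the asymptotic sense, and then reassemble.

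Fix $M=M(\epsilon)$ large enough that Theorem~1.3, applied in its uniform two-sided form, gives $\pS_B(y)\asymp\BCap(B)/\dist(y,B)^{d-2}$ whenever $\dist(y,B)\ge M\,\diam(B)$. Partition $A=A_1\cup\cdots\cup A_N$ into $N=N(\epsilon)$ pieces, each of diameter at most $\epsilon\,\diam(A)/(2M)$, for instance by intersecting $A$ with a fixed grid of that mesh. Under the hypothesis $\dist(x,A)\ge\epsilon\,\diam(A)$, every piece then satisfies
\begin{equation*}
2M\,\diam(A_i)\le\epsilon\,\diam(A)\le\dist(x,A)\le\dist(x,A_i),
\end{equation*}
so Theorem~1.3 applies to each piece. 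Combined with the elementary bound $\dist(x,A_i)\le\dist(x,A)+\diam(A)\le(1+\epsilon^{-1})\dist(x,A)$, this yields
\begin{equation*}
\pS_{A_i}(x)\asymp\frac{\BCap(A_i)}{\dist(x,A)^{d-2}}\qquad(i=1,\ldots,N),
\end{equation*}
with constants depending only on $\epsilon$ and $M$.

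For the upper bound, a union bound together with monotonicity of branching capacity (Proposition~4.1 of \cite{Z161}, giving $\BCap(A_i)\le\BCap(A)$) produces
\begin{equation*}
\pS_A(x)\le\sum_{i=1}^N\pS_{A_i}(x)\preceq\frac{1}{\dist(x,A)^{d-2}}\sum_{i=1}^N\BCap(A_i)\le\frac{N\,\BCap(A)}{\dist(x,A)^{d-2}}.
\end{equation*}
For the lower bound, subadditivity $\BCap(A)\le\sum_i\BCap(A_i)$ forces some piece $A_{i^\ast}$ to carry at least a $1/N$ fraction of the total branching capacity, and monotonicity of $\pS$ gives
\begin{equation*}
\pS_A(x)\ge\pS_{A_{i^\ast}}(x)\succeq\frac{\BCap(A_{i^\ast})}{\dist(x,A)^{d-2}}\ge\frac{\BCap(A)}{N\,\dist(x,A)^{d-2}}.
\end{equation*}
All hidden constants depend on $\epsilon$ only through $N(\epsilon)$ and the constants in Theorem~1.3.

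The main obstacle is the very first step: upgrading the pointwise limit of Theorem~1.3 of \cite{Z161} to a uniform two-sided estimate valid over all pairs $(y,B)$ with $\dist(y,B)\ge M\,\diam(B)$. Provided Theorem~1.3 is established with quantitative error control in terms of $\diam(B)/\dist(y,B)$ --- which one expects from the proof in \cite{Z161}, since $\BCap$ is a continuous functional on finite sets of bounded diameter --- this uniformity follows by rescaling and a compactness argument on normalized configurations. Once this uniform input is secured, the cutting-and-assembling argument above is essentially bookkeeping with the constants $N(\epsilon)$ and $M(\epsilon)$.
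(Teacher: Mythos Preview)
Your proposal is correct and follows the same approach the paper indicates: the paper's entire argument is the parenthetical remark ``by cutting $A$ into a large number, depending on $\epsilon$, pieces,'' and you have fleshed out exactly this cutting argument. One small comment: the threshold $M$ need not depend on $\epsilon$; it is a single universal constant coming from Theorem~1.3 of \cite{Z161}, and only the number of pieces $N=N(\epsilon)$ depends on $\epsilon$. Your caveat about uniformity is well taken but not a real obstruction here---Theorem~1.3 in \cite{Z161} is in fact proved with the two-sided bound holding uniformly once $\dist(x,A)$ exceeds a fixed multiple of $\diam(A)$ (this is how such results are typically established, via quantitative error terms), so the ``obstacle'' you flag is already handled by the cited input.
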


For $\rS_A(x)$ and $\qS_A(x)$, we have (see Section 8 in \cite{Z161})
\begin{equation}\label{r-p}
\rS_A(x)\asymp \pS_A(x).
\end{equation}
\begin{equation}\label{for-q2}
\qS_A(x)=\sum_{y\in \Z^d}G_A(x,y)\rS_A(y);
\end{equation}
\begin{equation}\label{for-q1}
\qS_A(x)=\sum_{y\in \Z^d}g(x,y)\rS_A(y)\EsC^+_A(y);
\end{equation}
\begin{equation}\label{q-lim}
\qS_A(x)\sim \frac{t_d\cdot a_d^2\sigma^2\BCap(A)}{2\|x\|^{d-4}};
\end{equation}
where $\EsC^+_A(y)$ is the probability that an infinite snake starting from $y$ does not visiting $A$ except possibly for the range of the bush attached to the root (including the root), and $t_d$ is some constant depending on $\theta$. Note that $\EsC^+_A(y)<1-\qS_A(y)\rightarrow 1$, as $y\rightarrow\infty$.

\begin{lemma}\label{bd_Green}
For any $\lambda>0$, there exists $C=C(\lambda)>0$, such that:
for any $A\ssubset \Z^d$ and $x,y\in \Z^d$ with $\|x\|,\|y\|>(1+\lambda)\Rad (A)$, we have:
\begin{equation}\label{bd-Green}
G_A(x,y)\geq C g(x,y).
\end{equation}
\end{lemma}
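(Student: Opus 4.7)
The plan is to restrict $G_A(x,y)$ from below to random walk paths from $x$ to $y$ staying in $D:=V^c$, with $V=\BallE((1+\lambda/2)r)$ and $r=\Rad(A)$, and to control the killing factor $\prod_i(1-\rS_A(\gamma_i))$ by Jensen's inequality. Two pointwise bounds on $\rS_A$ drive everything: the branching-capacity bound $\BCap(A)\leq Cr^{d-4}$ (ball estimate plus monotonicity from \cite{Z161}) plugged into \eqref{bd-pp} and \eqref{r-p} yields $\rS_A(z)\leq C(\lambda)\,r^{d-4}/\|z\|^{d-2}$ for every $z\in D$; separately, since the adjoint $\mu$-GW root has zero children with positive probability $1-\mu(0)$, one has $\rS_A(z)\leq \mu(0)<1$ for every $z\notin A$, so $1-\rS_A\geq c_0:=1-\mu(0)>0$ on $D$.

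Apply Lemma \ref{pro-g1} after rescaling by some $n\asymp\max(\|x\|,\|y\|)$ (inflated slightly so the relevant annuli do not degenerate), with annular sets $U\subset V_{\mathrm{lem}}\subset\R^d$ depending on the single ratio $\hat r:=r/n\in(0,1/(1+\lambda)]$, chosen so $x,y\in nU\cap\Z^d$ and $B_n:=nV_{\mathrm{lem}}\cap\Z^d\subseteq D$; the lemma's constant stays uniformly positive over this compact one-parameter family. Let $\mathcal P$ be the resulting set of paths $\gamma:x\to y$ inside $B_n$ with $|\gamma|\leq 2n^2$, so $\sum_{\gamma\in\mathcal P}\SRW(\gamma)\geq C(\lambda)\,g(x,y)$. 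Normalizing $\SRW$ to a probability measure $\nu$ on $\mathcal P$ and using $\log(1-s)\geq -s/(1-s)\geq -s/c_0$ for $s\leq 1-c_0$ together with Jensen's inequality,
\[ G_A(x,y)\geq\sum_{\gamma\in\mathcal P}\BRW(\gamma)\geq C(\lambda)\,g(x,y)\,\exp\!\Bigl(-\tfrac{1}{c_0}\E_\nu\!\bigl[\textstyle\sum_i\rS_A(\gamma_i)\bigr]\Bigr). \]

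Decomposing each path at an arbitrary intermediate visit to a vertex $z\in D$ gives $\E_\nu[\sum_i\rS_A(\gamma_i)]\leq (C(\lambda)g(x,y))^{-1}\sum_{z\in D}\rS_A(z)\,g(x,z)\,g(z,y)$. Plugging in the first pointwise bound reduces the right-hand side to a constant times $r^{d-4}$ times the standard triangle integral of three Green functions centered at $x,y,0$; a dimensional calculation in $d\geq 5$ shows this quantity is at most $C(\lambda)(r/\min(\|x\|,\|y\|))^{d-4}\leq C'(\lambda)$, uniformly in $A$. Exponentiating yields $G_A(x,y)\geq c(\lambda)\,g(x,y)$, as required. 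The main obstacle is uniformity: one must verify that Lemma \ref{pro-g1}'s constant is continuous in the parameter $\hat r$ (hence bounded below on its compact range) and that the triangle-integral estimate remains sharp enough across all geometric regimes of $\|x\|,\|y\|,\|x-y\|$ relative to $r$.
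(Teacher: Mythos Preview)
The paper does not give a proof of this lemma; it appears in Section~2.5 among results imported from~\cite{Z161}. The closest thing proved here is the unnamed lemma at the start of Section~5, which treats only the annular case $x,y\in B_2\setminus B_1$ (so $\|x\|,\|y\|\asymp n:=\Rad(A)$), and does so by a much simpler device than yours: since $\rS_A\preceq n^{-2}$ uniformly on that shell, any path $\gamma$ there with $|\gamma|\le 2n^2$ satisfies $\BRW(\gamma)/\SRW(\gamma)\ge(1-c/n^2)^{2n^2}\succeq 1$, and Lemma~\ref{pro-g1} with one fixed annular pair $(U,V)$ finishes. No Jensen, no triangle integral.

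Your plan attacks the general case directly, and that genuinely requires more: when $\min(\|x\|,\|y\|)\asymp r$ while $\max(\|x\|,\|y\|)\gg r$ the path must cross many scales, and the crude bound $\rS_A\le c/r^2$ combined with path length $\asymp(\max\|\cdot\|)^2$ gives an unbounded exponent. Your idea of using the sharper pointwise bound $\rS_A(z)\preceq r^{d-4}\|z\|^{2-d}$ together with Jensen and the triple convolution $\sum_z g(x,z)g(z,y)\|z\|^{2-d}$ is the right instinct, and the dimensional estimate $(r/\min(\|x\|,\|y\|))^{d-4}$ for the normalized triple sum does hold after a routine case-split on the relative sizes of $\|x\|,\|y\|,|x-y|$.

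The real gap is exactly where you flag it, but the fix you propose does not work: the parameter range $\hat r\in(0,1/(1+\lambda)]$ is \emph{not} compact, and worse, as $\hat r\to 0$ the rescaled point $x$ sits at distance $(1+\lambda)\hat r\to 0$ from the origin while the inner radius of your annulus $V_{\mathrm{lem}}$ is $(1+\lambda/2)\hat r$, so the separation $\dist(\overline U,\partial V_{\mathrm{lem}})\to 0$ in the rescaled picture. Lemma~\ref{pro-g1}'s constant depends on a fixed pair $(U,V)$ with $\overline U\subset V$, and there is no reason for it to remain bounded below as this separation collapses; continuity in $\hat r$ on a non-compact interval gives you nothing. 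One way to repair the argument: when $\min(\|x\|,\|y\|)\le M r$ for a large fixed $M$, first run the killed walk from the near point out to $\partial\Ball(Mr)$ using the Section~5 annulus argument (paying only a constant), reducing to both points at distance $\ge Mr$; when both are far, apply Lemma~\ref{pro-g1} with full balls (no inner hole), then show that the $\SRW$-mass of short paths hitting $\Ball((1+\lambda/2)r)$ is at most half of $C g(x,y)$ for $M$ large, so you may discard them and run your Jensen/triangle step on the remainder. As written, the single-shot application of Lemma~\ref{pro-g1} over a ``compact one-parameter family'' does not go through.
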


\section{Branching recurrence and branching transience.}
Analogous to random walk, we give the definitions of branching recurrence and branching transience. Recall that we always assume $d\geq5$.
\begin{definition}
Let $A$ be a subset of $\Z^d$. We call $A$ a \textbf{branching recurrent} (\textbf{B-recurrent}) set if
\begin{equation}\label{d1}
P(\Snake_0^{\infty} \text{ visits } A \text{ infinitely often})=1,
\end{equation}
and a \textbf{branching transient} (\textbf{B-transient}) set if
\begin{equation}\label{d2}
P(\Snake_0^{\infty} \text{ visits } A \text{ infinitely often})=0.
\end{equation}
\end{definition}

In fact, it is equivalent to use the incipient infinite snake in the definition of branching recurrence and branching transience.
\begin{prop}
\begin{equation*}
P(\Snake_0^{\infty} \text{ visits } A \text{ infinitely often})=1\Leftrightarrow
P(\overline{\Snake}_0^{\infty} \text{ visits } A \text{ infinitely often})=1.
\end{equation*}
\end{prop}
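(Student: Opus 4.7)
The plan is to decompose the incipient infinite snake $\overline{\Snake}_0^\infty$ into two overlapping sub-snakes, each distributed as $\Snake_0^\infty$, and then combine a union bound with a Hewitt--Savage $0$--$1$ law.

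First I would set up the decomposition. At each spine vertex $v_n$ of the $\mu$-GW tree conditioned on survival, let $I_n$ (resp.\ $J_n$) be the number of elder (resp.\ younger) siblings of $v_{n+1}$; by construction $P(I_n=i,J_n=j)=\mu(i+j+1)$, which is symmetric in $(i,j)$. Let $\mathcal T^L$ denote the subtree consisting of the spine together with all left-side bushes, and let $\mathcal T^R$ be the analogous subtree built from the right-side bushes. Then $\mathcal T^L\cup\mathcal T^R=\overline{\Snake}_0^\infty$, and $\mathcal T^L\cap\mathcal T^R$ is exactly the spine. The paper already records $\mathcal T^L\stackrel{d}{=}\Snake_0^\infty$; by $(I_n,J_n)\stackrel{d}{=}(J_n,I_n)$ together with the conditional independence of non-spine subtrees given the spine, $\mathcal T^R\stackrel{d}{=}\Snake_0^\infty$ as well.

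The direction $\Rightarrow$ is then immediate since $\mathcal T^L\subseteq\overline{\Snake}_0^\infty$. For the direction $\Leftarrow$, I would let $a,b,c$ count the $A$-valued vertex labels on the spine, in the left bushes, and in the right bushes respectively. The event ``$\overline{\Snake}_0^\infty$ visits $A$ infinitely often'' is $\{a+b+c=\infty\}\subseteq\{a+b=\infty\}\cup\{a+c=\infty\}$, so a union bound combined with the distributional identity gives
$$
1=P(\overline{\Snake}_0^\infty\text{ visits }A\text{ i.o.})\leq 2\,P(\Snake_0^\infty\text{ visits }A\text{ i.o.}),
$$
i.e.\ $P(\Snake_0^\infty\text{ visits }A\text{ i.o.})\geq\tfrac{1}{2}$.

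To upgrade this lower bound to $1$ I would sample $\Snake_0^\infty$ via the i.i.d.\ pairs $(\xi_n,\mathrm{bush}_n)_{n\geq 1}$, where $\xi_n$ is the $n$-th spine increment and $\mathrm{bush}_n$ is the abstract rooted adjoint $\mu$-GW bush at $v_n$ equipped with its own i.i.d.\ $\theta$-distributed edge labels, so that the bush's $\Z^d$-valued vertex labels are $S_n$ shifted by those offsets. Because the offspring law is critical, each bush is almost surely finite, so any finite permutation of these pairs alters only finitely many vertices of $\Snake_0^\infty$; hence ``visits $A$ infinitely often'' is an exchangeable event and Hewitt--Savage forces its probability into $\{0,1\}$. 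Combined with the lower bound $\tfrac12$, this pins the probability at $1$. The only delicate point I expect is verifying that $\mathcal T^R$, read jointly with the same spine random walk, genuinely matches the Kesten-type description of $\Snake_0^\infty$ from Section 2.1 (rather than merely having the right marginal bush count); everything else is routine.
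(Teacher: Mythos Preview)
Your argument is correct but takes a different route from the paper's. For the direction $\Leftarrow$ the paper avoids any $0$--$1$ law: it samples $\overline{\Snake}_0^\infty$ first and then constructs a copy of $\Snake_0^\infty$ on the same backbone by, at each spine vertex independently, flipping a fair coin to retain either the left or the right adjoint bush from $\overline{\Snake}_0^\infty$; since each marginal bush law is adjoint $\mu$-GW and bushes at distinct spine vertices are independent, this genuinely has the law of $\Snake_0^\infty$. If $\overline{\Snake}_0^\infty$ visits $A$ infinitely often then (each bush being a.s.\ finite) infinitely many of its bushes hit $A$, and conditionally on $\overline{\Snake}_0^\infty$ each such bush is retained with probability at least $1/2$ independently, so Borel--Cantelli yields probability $1$ directly. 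Your route---decompose into $\mathcal T^L$ and $\mathcal T^R$, union-bound to reach $\ge 1/2$, then invoke Hewitt--Savage---is also valid; it is one step longer, but the $0$--$1$ ingredient is essentially Proposition~\ref{recurrent}, which the paper proves immediately afterwards (via Kolmogorov rather than Hewitt--Savage), so you could shorten your argument by citing that instead of setting up exchangeability. The point you single out as delicate, that $\mathcal T^R$ has exactly the law of $\Snake_0^\infty$, does go through: in the Kesten construction the pairs $(I_n,J_n)$ at distinct spine vertices are independent, so the right bushes are i.i.d.\ adjoint $\mu$-GW trees independent of the backbone.
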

\begin{proof}
The necessity is trivial. For the sufficiency, we use the following coupling between $\overline{\Snake}_0^\infty$ and $\Snake_0^{\infty}$. First sample $\overline{\Snake}_0^{\infty}$. Then we can construct $\Snake_0^{\infty}$ as follows: for the the backbone of $\Snake_0^{\infty}$, just use the backbone of $\overline{\Snake}_0^{\infty}$; for each vertex in the backbone, we graft to it an adjoint snake, independently, using either the left adjoint snake or the right one, corresponding to the same vertex in $\overline{\Snake}_0^{\infty}$, with equal probability. When $\overline{\Snake}_0^{\infty}$ visits $A$ infinitely often, there are infinite adjoint snakes on $\overline{\Snake}_0^{\infty}$ visiting $A$. For each vertex on the backbone, we will pick up either the left adjoint snake or the right one independently with equal probability. Therefore, by the strong law of large numbers, an infinite number of adjoint snakes that visits $A$ will be picked up on the process of producing $\Snake_0^{\infty}$, almost surely. It means that $\Snake_0^{\infty}$ visits $A$ infinitely often almost surely.
\end{proof}

\begin{prop}\label{recurrent}
Every set $A\subset\Z^d$ is either B-recurrent or B-transient.
\end{prop}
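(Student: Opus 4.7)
The plan is to reduce the statement to a Hewitt--Savage zero-one law applied to an i.i.d.\ encoding of $\Snake_0^\infty$. Recall from Section 2 that $\Snake_0^\infty$ is built from a backbone $S(0)=0,S(1),S(2),\dots$ (an ordinary $\theta$-random walk) together with, at each backbone vertex $S(n)$, an independent adjoint snake $T_n$, which may be regarded as an independent copy of $\Snake'_0$ translated by $S(n)$. Because $\mu$ is critical, every adjoint $\mu$-GW tree is a.s.\ finite, hence each $T_n$ lands at only finitely many vertices of $A$. Consequently, setting $E_n:=\{T_n+S(n)\text{ visits }A\}$ and $F:=\limsup_{n\geq 1}E_n$, the events $F$ and $\{\Snake_0^\infty\text{ visits }A\text{ infinitely often}\}$ agree almost surely; it therefore suffices to show $P(F)\in\{0,1\}$.

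Next, I would package the data as $Y_n:=(X_n,T_n)$ for $n\geq 1$, where $X_n:=S(n)-S(n-1)$ is the $n$-th backbone increment; the sequence $(Y_n)_{n\geq 1}$ is i.i.d.\ by construction, and $F$ is measurable with respect to $\sigma(Y_1,Y_2,\ldots)$. The one genuine computation is to verify that $F$ is exchangeable. Let $\pi$ be a finite permutation of $\N$ fixing every index larger than some $N$; then $\sum_{i\leq N}X_{\pi(i)}=\sum_{i\leq N}X_i$, so the permuted backbone satisfies $S^\pi(n)=S(n)$ for every $n>N$, while $T_n^\pi=T_n$ for $n>N$ as well. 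Hence $E_n^\pi=E_n$ for every $n>N$, and therefore $\limsup E_n^\pi=\limsup E_n$. By Hewitt--Savage, $P(F)\in\{0,1\}$, which is the claimed dichotomy.

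I do not anticipate any substantive obstacle. The only step that is not wholly formal is the identification $\{\text{visits }A\text{ i.o.}\}=F$ almost surely, which reduces to the a.s.\ finiteness of each adjoint snake; once that is in place, the argument is simply an exchangeability check followed by an invocation of Hewitt--Savage.
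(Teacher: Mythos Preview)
Your argument is correct. You package the infinite snake as the i.i.d.\ sequence $Y_n=(X_n,T_n)$, observe that the event $F=\limsup E_n$ is invariant under finite permutations of the $Y_n$ (since a permutation fixing indices $>N$ leaves both $S(n)$ and $T_n$ unchanged for $n>N$), and conclude by Hewitt--Savage; the identification of $F$ with $\{\Snake_0^\infty\text{ visits }A\text{ i.o.}\}$ indeed reduces to the almost sure finiteness of each adjoint $\mu$-GW tree, which holds because $\mu$ is critical.

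The paper takes a different route. It sets $f(x)=P(\Snake_x^\infty\text{ visits }A\text{ i.o.})$, notes that $f$ is a bounded harmonic function on $\Z^d$ (harmonicity in the backbone variable), invokes the Liouville property to conclude $f\equiv t$ is constant, and then finishes with a Kolmogorov-type tail argument. Your approach is somewhat more self-contained: it avoids appealing to the Liouville theorem for bounded harmonic functions and works directly with the i.i.d.\ structure. The paper's approach, on the other hand, gives as a free byproduct that the probability of visiting $A$ infinitely often is the same from every starting point, not just from $0$. Both proofs are short and standard; yours is arguably the cleaner one for exactly the statement at hand.
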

\begin{proof}
Let $f(x)=P(\Snake_x^{\infty} \text{ visits } A \text{ infinitely often})$. It is easy to see that $f$ is a bounded harmonic function. But every bounded harmonic function in $\Z^d$ is constant. Hence $f\equiv t$ for some $t\in[0,1]$. Let $V$ be the event $\Snake_0^{\infty} \text{ visits } A \text{ infinitely often}$. Since $f\equiv t$, we have $P(V|\mathcal{F}_n)=t$ for any $n$, where $\mathcal{F}_n$ is the $\sigma$-field generated by all 'information' (the tree structure and the random variables corresponding to the edges after $n$-th vertex of the spine). Then $V$ is a tail event, and by the Kolmogorov 0-1 Law, $t$ is either $0$ or $1$.
\end{proof}

If $A$ is finite, since $\qS_A(x)<1$ for large $x$, $f(x)<1$ and must be $0$. Hence we have:

\begin{prop}
Every finite subset of $\Z^d$ is B-transient.
\end{prop}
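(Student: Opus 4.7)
The plan is to combine the zero-one dichotomy from Proposition \ref{recurrent} with the decay of the one-time visiting probability $\qS_A(x)$ at infinity. By Proposition \ref{recurrent}, the function $f(x) = P(\Snake_x^{\infty} \text{ visits } A \text{ infinitely often})$ takes the single value $0$ or the single value $1$ on all of $\Z^d$. So it suffices to exhibit one point $x$ at which $f(x) < 1$, since that will force $f \equiv 0$, which is by definition B-transience of $A$.

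To produce such an $x$, I would use the trivial domination
\begin{equation*}
f(x) \leq P(\Snake_x^{\infty} \text{ visits } A) = \qS_A(x),
\end{equation*}
valid because the event of visiting infinitely often is contained in the event of visiting at least once. Then I would invoke the asymptotic \eqref{q-lim},
\begin{equation*}
\qS_A(x) \sim \frac{t_d \cdot a_d^2 \sigma^2 \BCap(A)}{2\|x\|^{d-4}},
\end{equation*}
which applies since $A$ is finite (so $\BCap(A) < \infty$). Because $d \geq 5$ gives $d - 4 \geq 1$, the right-hand side tends to $0$ as $\|x\| \to \infty$. Hence $\qS_A(x) < 1$ for all sufficiently distant $x$, and in particular $f(x) < 1$ at such an $x$. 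By the dichotomy, $f \equiv 0$, completing the proof.

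There is essentially no obstacle: all the analytic work has already been absorbed into \eqref{q-lim} and into the Liouville/Kolmogorov argument of Proposition \ref{recurrent}. The only mild point is to be explicit that finiteness of $A$ is used solely to ensure $\BCap(A) < \infty$ so that the asymptotic for $\qS_A$ actually produces a vanishing upper bound.
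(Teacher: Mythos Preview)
Your proof is correct and follows essentially the same approach as the paper: use the zero-one dichotomy of Proposition \ref{recurrent}, then observe that $f(x)\leq \qS_A(x)<1$ for $\|x\|$ large (the paper states this last fact without citing \eqref{q-lim} explicitly, but that is the content), forcing $f\equiv 0$.
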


\section{Inequalities for convolved sums.}
We need the following two inequalities in our proof of Wiener's Test.
\begin{lemma}\label{CVV}
For any $n\in\N^+$, let $B=\Ball(n)$. When $A\subset B$ and $x\in\Z^d$, we have:
\begin{equation}\label{Cvv-q}
\sum_{z\in B}G_A(x,z)\qS_A(z)\preceq (\diam (B))^2 \qS_A(x).
\end{equation}
\begin{equation}\label{Cvv-p}
\sum_{z\in B}G_A(x,z)\pS_A(z)\preceq (\diam (B))^2 \pS_A(x).
\end{equation}
\end{lemma}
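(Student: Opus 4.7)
My plan is to reduce both (\ref{Cvv-q}) and (\ref{Cvv-p}) to a single kernel estimate
\begin{equation*}
\sum_{z\in B}G_A(x,z)\,G_A(z,y)\ \le\ C\,n^2\,G_A(x,y)\qquad\text{for all }x,y\in\Z^d,\ A\subset B, \qquad (\star)
\end{equation*}
and then prove $(\star)$. Substituting the identity $\qS_A(z)=\sum_y G_A(z,y)\rS_A(y)$ from (\ref{for-q2}) and exchanging summation gives $\sum_{z\in B}G_A(x,z)\qS_A(z)=\sum_y\rS_A(y)\sum_{z\in B}G_A(x,z)G_A(z,y)$; applying $(\star)$ pointwise in $y$ and resumming against $\rS_A(y)$ produces (\ref{Cvv-q}). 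The same bookkeeping using $\pS_A(z)=G_A(z,A)=\sum_{a\in A}G_A(z,a)$ from (\ref{p2}) and summing over $a\in A$ yields (\ref{Cvv-p}).

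To prove $(\star)$, I would first reduce to the case $x,y\in B$ by the First-Visit Lemma \ref{bd-1}. If $x\notin B$, decompose $G_A(x,z)$ via the first entry into $B$, $G_A(x,z)=\sum_{z_0\in B}\Hm^{B^c}_A(x,z_0)G_A(z_0,z)$; granting $(\star)$ for points in $B$,
\begin{equation*}
\sum_{z\in B}G_A(x,z)G_A(z,y)=\sum_{z_0\in B}\Hm^{B^c}_A(x,z_0)\sum_{z\in B}G_A(z_0,z)G_A(z,y)\le Cn^2\sum_{z_0\in B}\Hm^{B^c}_A(x,z_0)G_A(z_0,y)\le Cn^2 G_A(x,y),
\end{equation*}
the final inequality holding because $G_A(x,y)$ already counts at least the contribution of those $x$-to-$y$ paths that enter $B$. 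A symmetric last-exit decomposition reduces $y\notin B$ to $y\in B$.

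For the core case $x,y\in B$: when $x\in A$ or $y\in A$ the inequality is trivial, since then $G_A(x,\cdot)=\delta_x$ (respectively $G_A(\cdot,y)=\delta_y$) and the left-hand side of $(\star)$ collapses to $G_A(x,y)$. When $x,y\in B\setminus A$ both lie at distance at least $c\,\Rad(A)$ from $A$, bounding $G_A\le g$ on the left and invoking Lemma \ref{bd_Green} on the right reduces $(\star)$ to its unkilled analog
\begin{equation*}
\sum_{z\in B}g(x,z)g(z,y)\ \le\ Cn^2 g(x,y),
\end{equation*}
which follows from (\ref{green}) by a short case analysis on whether $x,y$ lie in $B$ (using $\|x-y\|\le 2n$) or outside (using the elementary inequality $n\|x-y\|\le 2\|x\|\|y\|$ when $\|x\|,\|y\|\ge n$).

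The main obstacle is the intermediate case where $x$ (or $y$) sits in $B\setminus A$ but close to $A$: here $G_A(x,y)$ can be much smaller than $g(x,y)$, so bounding $G_A\le g$ on the left is too crude to match the right side. I would handle this by peeling off a neighborhood $U=\Ball((1+\lambda)\Rad(A))$ of $A$ and applying Lemma \ref{bd-1} to split $G_A(x,\cdot)$ at the first exit of $U$; the mass sits on points $z_0\in U^c$ that are far from $A$, at which the previously established far-from-$A$ case applies.
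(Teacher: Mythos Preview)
Your reduction to the kernel inequality $(\star)$ is a clean idea, but the argument has a genuine gap at precisely the point you flag as the ``main obstacle''.

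First, the claim that $G_A(\cdot,y)=\delta_y$ when $y\in A$ is false. By the definition \eqref{def-b} the killing factor $\prod_{i=0}^{|\gamma|-1}(1-\rS_A(\gamma(i)))$ runs only over $i\le|\gamma|-1$, so the endpoint is \emph{not} killed; a path from $z\notin A$ that first meets $A$ at its terminal point $y$ contributes positive $\BRW$-weight. Hence $G_A(z,y)>0$ for many $z\neq y$, and the case $y\in A$ is not trivial. This matters because after your reduction, \eqref{Cvv-p} is \emph{exactly} the statement of $(\star)$ summed over $y\in A$; you cannot dismiss that case.

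Second, your sketch for the ``intermediate'' case does not close. Peeling off $U=\Ball((1+\lambda)\Rad(A))$ via first exit handles the part of the sum with $z\notin U$, but the contribution from $z\in U$ remains, and there the crude bound $G_A\le g$ on the left cannot be matched by Lemma~\ref{bd_Green} on the right, since $G_A(x,y)$ may be arbitrarily small compared to $g(x,y)$ when $x$ (or $y$) sits deep inside $U$. The symmetric problem when \emph{both} $x$ and $y$ lie in $U$---in particular $y\in A$---is not addressed at all, and this is precisely the regime carrying the mass of $\rS_A(y)$ in your expansion of $\qS_A$.

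The paper sidesteps these difficulties. For \eqref{Cvv-q} it does \emph{not} expand $\qS_A$ via \eqref{for-q2}; instead it uses the representation \eqref{for-q1}, $\qS_A(x)=\sum_w g(x,w)\rS_A(w)\EsC^+_A(w)$, which involves the \emph{unkilled} Green function. A flag/last-visit argument then reduces everything to the unkilled estimate \eqref{SRW1}, avoiding any lower bound on $G_A$ near $A$. For \eqref{Cvv-p} the paper postpones the proof: it first uses \eqref{Cvv-q} together with the restriction lemmas of Section~5 and a Cauchy--Schwarz argument (Section~6) to obtain $\qS_A(y)\preceq n^2\pS_A(y)$ for $y\in B$, from which \eqref{Cvv-p} follows via \eqref{q=cvv-p}. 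In other words, \eqref{Cvv-p} is genuinely harder and is derived from \eqref{Cvv-q} rather than in parallel with it.
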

We prove \eqref{Cvv-q} here and postpone the proof of \eqref{Cvv-p} until Section 6.
\begin{proof}[Proof of \eqref{Cvv-q}]
For \eqref{Cvv-q}, we do not need to assume that $B$ is a ball and $A\subset B$. In fact, we will prove \eqref{Cvv-q} for any finite subsets $A,B$ of $Z^d$ and $x\in \Z^d$.

We are working at the random walk with killing function $\rS_A$. We consider the following equivalent model: a particle starting from $x$ executes a random walk $S=(S(k))_{k\in\N}$, but at each step, the particle has the probability $\rS_A$ to get a flag (instead of to die) and its movements are unaffected by flags. Let $\tau$ and $\xi$ be the first and last time getting flags (if there is no such time, define both to be infinity). Note that since $\qS_A(z)<1$ (when $|z|$ is large), the total number of flags gained is finite, almost surely. Hence $P(\tau<\infty)=P(\xi<\infty)$. Under this model, one can see that
$$
G_A(x,z)\qS_A(z)=P(\tau<\infty, S(k)=z\text{ for some }k\leq\tau).
$$
Hence it is not more than $\E(\sum_{i=0}^{\tau}\mathbf{1}_{\{S(i)=z\}}; \tau<\infty)$ and the L.H.S. of \eqref{Cvv-q} is not more than
$$
\E(\sum_{i=0}^{\tau}\mathbf{1}_{\{S_x(i)\in B\}}; \tau<\infty)\leq \E(\sum_{i=0}^{\xi}\mathbf{1}_{\{S_x(i)\in B\}}; \xi<\infty).
$$
By considering the place where the particle gets its last flag, one can see:
$$
\E(\sum_{i=0}^{\xi}1_{\{S_x(i)\in B\}}; \xi<\infty)=
\sum_{w\in\Z^d}\left(\sum_{\gamma:x\rightarrow w}
\SRW(\gamma)(\sum_{i=0}^{|\gamma|}\mathbf{1}_{\gamma(i)\in B})\right)\cdot\rS_A(w)\EsC^+_A(w).
$$
We point out a result about random walk and prove it later:
\begin{equation}\label{SRW1}
\sum_{\gamma:x\rightarrow w}
\SRW(\gamma)(\sum_{i=0}^{|\gamma|}\mathbf{1}_{\gamma(i)\in B})\preceq
(\diam(B))^2 \sum_{\gamma:x\rightarrow w}\SRW(\gamma).
\end{equation}
Hence we get:
\begin{align*}
\sum_{z\in B}G_A(x,z)\qS_A(z)\preceq&(\diam(B))^2\sum_{w\in\Z^d}\sum_{\gamma:x\rightarrow w}\SRW(\gamma)\rS_A(w)\EsC^+_A(w)\\
=&(\diam(B))^2\sum_{w\in\Z^d}g(x,w)\rS_A(w)\EsC^+_A(w)\\
\stackrel{\eqref{for-q1}}{=}&(\diam(B))^2\qS_A(x).\\
\end{align*}
Now we just need to prove \eqref{SRW1}. First we assume $x,w\in B$, then
\begin{multline*}
\sum_{\gamma:x\rightarrow w}\SRW(\gamma)(\sum_{i=0}^{|\gamma|}\mathbf{1}_{\gamma(i)\in B})
\leq \sum_{\gamma:x\rightarrow w}\SRW(\gamma)[\gamma]\stackrel{\eqref{ngreen}}{\asymp} |x-w|^{4-d}\\
\leq (\diam(B))^2|x-w|^{2-d} \asymp(\diam(B))^2 g(x,w)=(\diam(B))^2\sum_{\gamma:x\rightarrow w}\SRW(\gamma).
\end{multline*}
For general $x,w$, one just need to decompose $\gamma$ into three paths according to the first and last visiting time of $B$. For example, when $x,w\notin B$, we have:
\begin{align*}
\sum_{\gamma:x\rightarrow w}&\SRW(\gamma)(\sum_{i=0}^{|\gamma|}\mathbf{1}_{\gamma(i)\in B})
=\sum_{y,z\in B}\Hm^{B^c}(x,y)\left(\sum_{\gamma':y\rightarrow z}\SRW(\gamma')\sum_{i=0}^{|\gamma'|}\mathbf{1}_{\gamma'(i)\in B}\right)\Hm^{B^c}(z,w)\\
\preceq&\sum_{y,z\in B}\Hm^{B^c}(x,y)\left((\diam(B))^2
\sum_{\gamma':y\rightarrow z}\SRW(\gamma')\right)\Hm^{B^c}(z,w)\\
=&(\diam(B))^2\sum_{y,z\in B}\Hm^{B^c}(x,y)\sum_{\gamma':y\rightarrow z}\SRW(\gamma')\Hm^{B^c}(z,w)\\
=&(\diam(B))^2\sum_{\gamma:x\rightarrow w, \gamma \text{ visits }B}\SRW(\gamma)
\leq(\diam(B))^2\sum_{\gamma:x\rightarrow w}\SRW(\gamma).\\
\end{align*}
When just one of $x$ and $w$ is in $B$, the proof is similar but easier.
\end{proof}

\section{Restriction lemmas.}
Recall that we have:
$$
\pS_A(x)=\sum_{\gamma:x\rightarrow A}\BRW(\gamma).
$$
Our goals of this section are to show:
\begin{prop}\label{pS_form}
For any $n\in\N^+$ sufficiently large and $A\subset \Ball(n),x\in \Ball(n)$, we have:
\begin{equation}\label{pS-form}
\pS_A(x)\asymp\sum_{\gamma:x\rightarrow A,\gamma\subseteq\Ball(1.1n)}\BRW(\gamma).
\end{equation}
\end{prop}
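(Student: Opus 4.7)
The $\succeq$ direction of \eqref{pS-form} is immediate, since every $\gamma\subseteq\Ball(1.1n)$ joining $x$ to $A$ contributes identically to both sides, and $\pS_A(x)=G_A(x,A)=\sum_{\gamma:x\to A}\BRW(\gamma)$ by \eqref{p2}.

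For the $\preceq$ direction, set $B=\Ball(1.1n)$ and apply Lemma \ref{bd-1} (First-Visit), decomposing for each $z\in A\subset B$ paths $x\to z$ by the first exit from $B$:
\[
G_A(x,z)=\Hm^B_A(x,z)+\sum_{w\in B^c}\Hm^B_A(x,w)\,G_A(w,z).
\]
Summing over $z\in A$ gives $\pS_A(x)=U(x)+V(x)$, where $U(x)=\sum_{z\in A}\Hm^B_A(x,z)$ is the right-hand side of \eqref{pS-form} and $V(x)=\sum_{w\in B^c}\Hm^B_A(x,w)\,\pS_A(w)$. Since $U\leq\pS_A$ is automatic, the problem reduces to showing $V(x)\preceq U(x)$, which I aim to establish via the two matching bounds $V(x)\preceq\BCap(A)/n^{d-2}$ and $U(x)\succeq\BCap(A)/n^{d-2}$.

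The upper bound on $V$ is the easier of the two: since $\theta$ has finite range $R$, only $w$ with $\|w\|\in(1.1n,1.1n+R]$ contribute to $V$, so $\dist(w,A)\succeq n$. Combined with $\diam(A)\leq 2n$, \eqref{bd-pp} (with a fixed small $\epsilon>0$) gives $\pS_A(w)\leq C_1\BCap(A)/n^{d-2}$, and $\sum_w\Hm^B_A(x,w)\leq 1$ then yields $V(x)\preceq\BCap(A)/n^{d-2}$.

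The main obstacle is the matching lower bound on $U$. I would restrict $U(x)$ to paths $\gamma:x\to y\in A$ of length $\leq 2n^2$ lying in $\Ball(1.05n)\subset B$. Lemma \ref{pro-g1} provides the unkilled comparison $\sum_\gamma\SRW(\gamma)\succeq g(x,y)$, and the killing factor $\prod_i(1-\rS_A(\gamma(i)))$ is bounded below on average by a positive constant by combining two observations: (i) uniformly for $z\notin A$, $\rS_A(z)\leq\mu(0)<1$, because the adjoint snake is reduced to its root with positive probability $\widetilde\mu(0)=1-\mu(0)$, preventing degeneracy of individual factors; and (ii) the expected accumulated killing along such a walk satisfies $\sum_i\rS_A(\gamma(i))\asymp\sum_i\pS_A(\gamma(i))\preceq n^2\cdot\BCap(A)/n^{d-2}=\BCap(A)/n^{d-4}\preceq 1$, using $\BCap(A)\leq\BCap(\Ball(n))\preceq n^{d-4}$ from the ball-capacity estimate (1.3). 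Summing over $y\in A$ and invoking $\sum_{y\in A}g(x,y)\geq\pS_A(x)\succeq\BCap(A)/n^{d-2}$---the latter by \eqref{bd-pp} when $\dist(x,A)\geq\epsilon\diam(A)$, and by reducing to a nearby cluster $A'=A\cap B(x,\dist(x,A)/\epsilon)\subset A$ (whose branching capacity is controlled by the same ball estimate (1.3) and to which \eqref{bd-pp} applies) when $x$ is close to $A$---then gives $U(x)\succeq\BCap(A)/n^{d-2}$, closing the argument. The main technical delicacy is the uniform control of the killing factor, especially for $x$ near $A$, where some care in the choice of path family is needed.
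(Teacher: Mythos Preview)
Your decomposition $\pS_A(x)=U(x)+V(x)$ and the upper bound $V(x)\preceq\BCap(A)/n^{d-2}$ are correct. The gap is in the lower bound $U(x)\succeq\BCap(A)/n^{d-2}$, specifically in claim (ii). You write that $\sum_i\pS_A(\gamma(i))\preceq n^2\cdot\BCap(A)/n^{d-2}$, which implicitly uses the pointwise bound $\pS_A(z)\preceq\BCap(A)/n^{d-2}$ along the whole path. But \eqref{bd-pp} gives this only when $\dist(z,A)\succeq n$; any path $\gamma:x\to A$ must enter the region near $A$, where $\pS_A(z)$ (and hence $\rS_A(z)$) is of order one. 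Concretely, take $A=\Ball(n/2)$ and $x$ at distance $1$ from $A$: then $\BCap(A)\asymp n^{d-4}$, so your asserted bound becomes $O(1)$, yet the unconditioned expectation $\E\sum_{i\leq 2n^2}\rS_A(S_x(i))\leq\sum_z g(x,z)\rS_A(z)\asymp\sum_{a\in A}|x-a|^{4-d}\asymp n^4$, and conditioning the walk to end in $A$ only makes this worse. Observation (i) alone cannot rescue the product: even with each factor $\geq 1-\mu(0)$, a path spending order $n^2$ steps in a region where $\rS_A\succeq c>0$ has $\BRW/\SRW$ decaying like $(1-c)^{n^2}$. Note also that even if you could establish $\pS_A(x)\succeq\BCap(A)/n^{d-2}$ independently, combining this with $V(x)\preceq\BCap(A)/n^{d-2}$ does not yield $U(x)\succeq\BCap(A)/n^{d-2}$ unless you control the implied constants.

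The paper avoids this obstacle entirely by never comparing $\BRW$ to $\SRW$ near $A$. It first proves $G_A^B(z,w)\succeq G_A(z,w)$ for $z,w$ in an annulus bounded away from $A$ (Lemma preceding \eqref{t1}), where the killing rate is genuinely $O(n^{-2})$ and your style of argument works cleanly. It then bootstraps to arbitrary $x,y\in\Ball((1+\lambda)n)$ (Lemma~\ref{lem-restriction}) by decomposing each path at its first and last exit from $B_1$: the segment between those exits lives in the annulus and is handled by the first lemma, while the initial and final segments inside $B_1$ appear identically (as $\Hm^{B_1}_A$) on both sides of the comparison, so no $\BRW$-versus-$\SRW$ estimate is needed there. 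Proposition~\ref{pS_form} then follows in one line by summing $G_A^B(x,z)\asymp G_A(x,z)$ over $z\in A$.
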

\begin{prop}\label{qS_form}
For any $n\in\N^+$ sufficiently large and $A\subset \Ball(n),x\in \Ball(n)$, we have:
\begin{equation}\label{qS-form}
\qS_A(x)\asymp \sum_{\gamma:x\rightarrow A,\gamma\subseteq\Ball(4n)}[\gamma]\cdot\BRW(\gamma).
\end{equation}
\end{prop}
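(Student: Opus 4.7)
\textbf{Step 1 (Path identity).} The plan is to recast $\qS_A(x)$ as a weighted path sum and then bound the contribution from paths that leave $\Ball(4n)$. By \eqref{for-q2} and \eqref{r-p}, $\qS_A(x)\asymp\sum_yG_A(x,y)G_A(y,A)$. Expanding both Green functions as sums over paths, using the multiplicativity $\BRW(\gamma_1)\BRW(\gamma_2)=\BRW(\gamma_1\circ\gamma_2)$, and noting that each path $\gamma:x\to A$ admits exactly $[\gamma]=|\gamma|+1$ decompositions $\gamma=\gamma_1\circ\gamma_2$ at intermediate vertices, one obtains $\qS_A(x)\asymp\sum_{\gamma:x\to A}[\gamma]\BRW(\gamma)$. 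Restricting the sum to $\gamma\subseteq\Ball(4n)$ therefore gives the inequality $\qS_A(x)\succeq\sum_{\gamma\subseteq\Ball(4n)}[\gamma]\BRW(\gamma)$ immediately.

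\textbf{Step 2 (Tail bound).} For the reverse direction, I aim to show $T:=\sum_{\gamma\not\subseteq\Ball(4n)}[\gamma]\BRW(\gamma)\preceq\BCap(A)n^{4-d}$. Decomposing $\gamma=\gamma_1\circ\gamma_2$ at the split vertex $y$ and using the union bound $\mathbf{1}[\gamma\not\subseteq\Ball(4n)]\le\mathbf{1}[\gamma_1\not\subseteq\Ball(4n)]+\mathbf{1}[\gamma_2\not\subseteq\Ball(4n)]+\mathbf{1}[y\notin\Ball(4n)]$ splits $T$ into three pieces; the first two are symmetric. For these, the first-visit Lemma \ref{bd-1} yields $G_A(x,y)-G_A^{\Ball(4n)}(x,y)=\sum_{z\notin\Ball(4n)}\Hm^{\Ball(4n)}_A(x,z)G_A(z,y)$; summing over $y$ and using $\sum_yG_A(z,y)\pS_A(y)\asymp\qS_A(z)\preceq\BCap(A)\|z\|^{4-d}$ (from \eqref{q-lim}, valid for $\|z\|\ge 4n$ with $n$ large) together with $\sum_z\Hm^{\Ball(4n)}_A\le 1$ bounds each piece by $\BCap(A)n^{4-d}$. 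The third piece ($y\notin\Ball(4n)$) uses $G_A(x,y)\le g(x,y)$ and $\pS_A(y)\preceq\BCap(A)\|y\|^{2-d}$ from \eqref{bd-pp}, producing $\preceq\BCap(A)\int_{4n}^\infty r^{3-d}\,dr\asymp\BCap(A)n^{4-d}$.

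\textbf{Step 3 (Lower bound on the restricted sum).} To convert Step 2 into $\qS_A(x)\preceq\text{RHS}$, I show that $\sum_{\gamma\subseteq\Ball(4n)}[\gamma]\BRW(\gamma)\succeq\BCap(A)n^{4-d}$. Restrict the split vertex $y$ to the annulus $\Ball(3n)\setminus\Ball(2n)$: for such $y$, \eqref{bd-pp} gives $\pS_A(y)\asymp\BCap(A)/n^{d-2}$, and Prop.~\ref{pS_form} applied at scale $3n$ yields $G_A^{\Ball(4n)}(y,A)\succeq G_A^{\Ball(3.3n)}(y,A)\asymp\pS_A(y)$. For $G_A^{\Ball(4n)}(x,y)\succeq g(x,y)\asymp n^{2-d}$, restrict further to paths of length $\le 2(3n)^2$ in $\Ball(4n)$ avoiding a $\delta n$-neighborhood $N_A$ of $A$: Lemma \ref{pro-g1} (with $U=\Ball(3),\,V=\Ball(4)$) controls the $\SRW$-weight, and on such paths $\rS_A\preceq\BCap(A)/(\delta n)^{d-2}\preceq n^{-2}$ since $\BCap(A)\preceq n^{d-4}$, so $\BRW\asymp\SRW$. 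Summing over the annulus (volume $\asymp n^d$) yields $\asymp\BCap(A)n^{4-d}$, and combining with Step 2 gives $\qS_A(x)\le C\cdot\text{RHS}+C'\BCap(A)n^{4-d}\le C''\cdot\text{RHS}$.

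The chief obstacle is Step 3 when $x$ itself lies inside $N_A$: the initial segment from $x$ to the good region $\Ball(4n)\setminus N_A$ needs separate treatment, for instance by noting that short paths from $x$ to $A$ already contribute positive constants to the sum (which dominate $\BCap(A)n^{4-d}\le 1$, since $\BCap(A)\preceq n^{d-4}$). The hypothesis $d\ge 5$ is crucial to keep the $\prod(1-\rS_A)$-factor along paths of length $\asymp n^2$ bounded away from $0$.
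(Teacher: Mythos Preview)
Your overall strategy---bound the tail contribution from paths leaving $\Ball(4n)$ by $\BCap(A)\,n^{4-d}$ and show the restricted sum already achieves this order---is natural, but Step~2 hides a circularity. The inequality $\qS_A(z)\preceq\BCap(A)\|z\|^{4-d}$ that you invoke for $z\in\outBd\Ball(4n)$ is \emph{not} a consequence of \eqref{q-lim}: that relation is an asymptotic as $z\to\infty$ for \emph{fixed} $A$, whereas you need the bound uniformly over all $A\subset\Ball(n)$ as soon as $\|z\|\ge 4n$. The uniform statement is exactly the upper half of Theorem~\ref{bd-infinite}, and in the paper that half is obtained from \eqref{q-p}, whose proof opens by invoking \eqref{qS-form} itself. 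So Step~2, as written, assumes the very estimate for which Proposition~\ref{qS_form} is the key input.

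The paper avoids this by bootstrapping from the boundary. It first establishes Lemma~\ref{qS_form0}: for $\|y\|\ge 1.1n$ one has $\qS_A(y)\asymp\sum_{\gamma:y\to A,\,\gamma\subseteq\Ball(3\|y\|)}[\gamma]\BRW(\gamma)$, and this requires only \eqref{bd-pp}, \eqref{bd-Green} and Lemma~\ref{lem-restriction}, not any uniform upper bound on $\qS_A$. Then for interior $x\in\Ball(n)$ the path is split at its first exit from $\Ball(1.1n)$ (not $\Ball(4n)$): writing $\gamma=\gamma_1\circ\gamma_2$ with $\gamma_1\subseteq\Ball(1.1n)$ and $y=\widehat{\gamma_1}\in\outBd\Ball(1.1n)$, the sums $\sum_{\gamma_2}[\gamma_2]\BRW(\gamma_2)\asymp\qS_A(y)$ and $\sum_{\gamma_2}\BRW(\gamma_2)=\pS_A(y)$ are replaced by their restricted versions via Lemma~\ref{qS_form0} and Proposition~\ref{pS_form}; since $3\|y\|<4n$, the reassembled paths automatically lie in $\Ball(4n)$. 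This two-scale bootstrap is the missing idea.

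Step~3 also has a gap you only partly acknowledge: when $A$ topologically separates $x$ from the annulus $\Ball(3n)\setminus\Ball(2n)$ (take for instance $A$ a thickened sphere around $\partial\Ball(n/2)$ and $x=0$), no path from $x$ to $y$ avoiding $N_A$ exists, so the Lemma~\ref{pro-g1} argument yields nothing; and since $\dist(x,A)\asymp n$ in that example, your ``short-path'' fallback does not apply either.
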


We first introduce some notations. Since $\theta$ has finite range, we can define the outer boundary $\outBd B$ for any $B\subseteq \Z^d$ by
$$
\outBd B =\{z\in \Z^d\setminus B: \exists y\in B, \theta(z-y)\vee \theta(y-z)>0\}.
$$
Note that for any $y\in \outBd B$, $\dist(y, B)$ is bounded above by a constant depending on $\theta$.
For $A\subset B\subset \Z^d$ and $x,y \in B\cup \outBd B$, write
$$
G_A^B(x,y)=\sum_{\gamma:x\rightarrow y,\gamma\subseteq B}\BRW(\gamma).
$$

\begin{lemma}
For any $\lambda_1,\lambda_2,\lambda_3>0$, there exists $C=C(\lambda_1,\lambda_2,\lambda_3)>0$ satisfying the following. When $n$ is sufficiently large,
let $B_0=\Ball(n)$,$B_1=\Ball ((1+\lambda_1)n)$, $B_2=\Ball((1+\lambda_1+\lambda_2)n)$ and $B=\Ball((1+\lambda_1+\lambda_2+\lambda_3)n)$. Then for any $x,y\in B_2\setminus B_1$ and $A\subset B_0$, we have:
\begin{equation}\label{t1}
G_A^{B}(x,y)\geq C G_A(x,y).
\end{equation}
\end{lemma}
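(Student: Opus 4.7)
The plan is to lower-bound $G_A^B(x,y)$ by restricting the defining sum to paths of length $O(n^2)$ that stay inside a fattened annulus separated from $A$, and then to invoke Lemma \ref{pro-g1}. Since $\BRW(\gamma) \le \SRW(\gamma)$ gives $G_A(x,y) \le g(x,y)$ for free, it suffices to produce a constant $C = C(\lambda_1,\lambda_2,\lambda_3) > 0$ with $G_A^B(x,y) \ge C\, g(x,y)$; this will imply the required inequality.

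To set up the application of Lemma \ref{pro-g1}, I would choose bounded connected open annuli $U, V \subset \R^d$ with $\overline{U} \subset V$ arranged so that $B_2 \setminus B_1 \subset nU \cap \Z^d$, $nV \cap \Z^d \subset B$, and $\dist(nV, B_0) \succeq n$. For instance, taking
\begin{equation*}
U = \{y \in \R^d : 1+\lambda_1 - \epsilon < \|y\| < 1+\lambda_1+\lambda_2+\epsilon\}, \qquad V = \{y \in \R^d : 1+\tfrac{\lambda_1}{2} < \|y\| < 1+\lambda_1+\lambda_2+\tfrac{\lambda_3}{2}\}
\end{equation*}
with $\epsilon = \min(\lambda_1,\lambda_3)/4$ satisfies all four constraints. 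Lemma \ref{pro-g1} then yields, for $x, y \in B_2 \setminus B_1$ and $n$ sufficiently large,
\begin{equation*}
\sum_{\gamma : x \to y,\, \gamma \subseteq nV \cap \Z^d,\, |\gamma| \le 2n^2} \SRW(\gamma) \ge C_1\, g(x,y).
\end{equation*}

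Finally I would convert $\SRW$ to $\BRW$ along this restricted family: every vertex $\gamma(i)$ of such a $\gamma$ lies at distance $\succeq n$ from $A \subset B_0$, so combining \eqref{bd-pp}, \eqref{r-p}, monotonicity of $\BCap$, and the ball bound $\BCap(\Ball(n)) \asymp n^{d-4}$ yields the uniform estimate $\rS_A(\gamma(i)) \asymp \pS_A(\gamma(i)) \preceq \BCap(A)/n^{d-2} \preceq 1/n^2$. With $|\gamma| \le 2n^2$, formula \eqref{def-b} then gives
\begin{equation*}
\BRW(\gamma) \ge \bigl(1 - C_2/n^2\bigr)^{2n^2}\, \SRW(\gamma) \ge c_0\, \SRW(\gamma)
\end{equation*}
for some $c_0 > 0$ and all large $n$. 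Since $nV \cap \Z^d \subset B$, each admissible $\gamma$ satisfies $\gamma \subseteq B$, and summing yields $G_A^B(x,y) \ge c_0 C_1\, g(x,y) \ge C\, G_A(x,y)$.

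The genuine point of care is the uniform bound $\rS_A \preceq 1/n^2$ on the annulus $nV$, which is what makes the killing factors negligible over a path of length $O(n^2)$; everything else is bookkeeping with the four nested annuli. In particular, once one chooses $U$ and $V$ with the correct slack inherited from strict positivity of $\lambda_1,\lambda_2,\lambda_3$, the rest of the argument reuses directly the machinery from \cite{Z161}.
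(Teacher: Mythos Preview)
Your proof is correct and follows essentially the same approach as the paper: restrict to paths of length at most $2n^2$ staying in an annulus separated from $B_0$, use Lemma~\ref{pro-g1} to get $\succeq g(x,y)$, and control the killing factors via $\rS_A \preceq n^{-2}$ to pass from $\SRW$ to $\BRW$. Your write-up is actually more explicit than the paper's---you spell out the annuli $U,V$ and justify $\pS_A \preceq n^{-2}$ via $\BCap(A)\le \BCap(\Ball(n))\asymp n^{d-4}$, whereas the paper just cites \eqref{bd-pp} and \eqref{r-p} directly.
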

\begin{proof}
Let $B'=\Ball ((1+\lambda_1/2)n)$. Note that for any $y\in B\setminus B'$, by \eqref{bd-pp} and \eqref{r-p}, we have $\rS_A(y)\asymp\pS_A(y)\preceq n^{-2}$. Hence, we have: for any $\gamma\subseteq B\setminus B'$ with $|\gamma|\leq 2n^2$, $\BRW(\gamma)/\SRW(\gamma)\geq(1-c/n^2)^{2n^2}\succeq1$.

Therefore, by Lemma \ref{pro-g1}, one can see that:
\begin{align*}
G_A^B(x,y)&=\sum_{\gamma:x\rightarrow y,\gamma\subseteq B}\BRW(\gamma)
\geq\sum_{\gamma:x\rightarrow y, \gamma\subseteq B\setminus B_1, |\gamma| \leq2n^2}\BRW(\gamma)\\
&\succeq\sum_{\gamma:x\rightarrow y, \gamma\subseteq B\setminus B_1, |\gamma| \leq2n^2}\SRW(\gamma)\asymp g(x,y)\geq G_A(x,y).
\end{align*}
\end{proof}
\begin{lemma}\label{lem-restriction}
For any $\lambda>0,\iota>0$, there exists $C=C(\lambda,\iota)>0$ satisfying the following. When $n$ is sufficiently large,
let $B_0=\Ball(n)$, $B_1=\Ball (1+\lambda)n$, $B=\Ball(1+\lambda+\iota)n$. Then for any $x,y\in B_1$ and $A\subset B_0$, we have:
\begin{equation}\label{t2}
G_A^B(x,y)\geq C G_A(x,y).
\end{equation}
\end{lemma}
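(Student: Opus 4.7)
The plan is to establish the restriction inequality first on an intermediate annulus using the previous lemma, and then extend it to all of $B_1$ via standard decompositions from Lemma \ref{bd-1}.

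Set $B'=\Ball((1+\lambda/2)n)$, so that $B'\subset B_1$ and the annulus $B_1\setminus B'$ has width $\lambda n/2$, which exceeds the jump range of $\theta$ for $n$ large; in particular $\outBd B'\subset B_1\setminus B'$ for such $n$. First I would apply the previous lemma with parameters $\lambda_1=\lambda/2$, $\lambda_2=\lambda/2$, $\lambda_3=\iota$, so that its four balls coincide exactly with $B_0,B',B_1,B$ here. The conclusion reads
$$
G_A^B(x,y)\succeq G_A(x,y)\qquad\text{for all }x,y\in B_1\setminus B'. \qquad (\ast)
$$

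From $(\ast)$ I would bootstrap to arbitrary $x,y\in B_1$ in three cases. For $x\in B_1\setminus B'$ and $y\in B'$, the last-visit identity from Lemma \ref{bd-1} (applied with $B=B'$) gives $G_A(x,y)=\sum_{w\in\outBd B'}G_A(x,w)\Hm^{B'}_A(w,y)$; both $x$ and $w$ then lie in the annulus, so $(\ast)$ bounds $G_A(x,w)\preceq G_A^B(x,w)$ termwise, and the sum $\sum_{w}G_A^B(x,w)\Hm^{B'}_A(w,y)$ is precisely the analogous last-visit decomposition of $G_A^B(x,y)$, yielding $G_A(x,y)\preceq G_A^B(x,y)$. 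Symmetrically, for $x\in B'$ and $y\in B_1\setminus B'$, the first-exit identity gives $G_A(x,y)=\sum_{z\in\outBd B'}\Hm^{B'}_A(x,z)G_A(z,y)$, to which $(\ast)$ applies termwise since $z,y$ both lie in the annulus. Finally, for $x,y\in B'$ I would decompose by first exit from $B'$: $G_A(x,y)=G_A^{B'}(x,y)+\sum_{z\in\outBd B'}\Hm^{B'}_A(x,z)G_A(z,y)$; the leading term is $\le G_A^B(x,y)$ because $B'\subset B$, while each $G_A(z,y)$ with $z\in\outBd B'$ and $y\in B'$ is controlled by the first case above.

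The key point validating each step is that the same decomposition identity used for $G_A$ also holds for $G_A^B$ itself: concatenating a path with interior in $B$ (from $x$ to an intermediate vertex $w$ or $z$) with one whose interior lies in $B'\subset B$ (from there to the endpoint) yields a path with interior in $B$, so the termwise comparisons assemble correctly into a bound on $G_A^B(x,y)$. Beyond this bookkeeping and the easy geometric fact that $\outBd B'\subset B_1\setminus B'$ for $n$ large, I do not anticipate any substantive obstacle.
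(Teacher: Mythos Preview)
Your proof is correct and uses the same two ingredients as the paper (the previous annulus lemma and the First-Visit Lemma), but assembles them differently. The paper applies the previous lemma to points on $\outBd B_1$, i.e.\ just \emph{outside} $B_1$, and then handles all $x,y\in B_1$ at once via a single symmetric decomposition: split
\[
G_A^B(x,y)=G_A^{B_1}(x,y)+\sum_{z,w\in\outBd B_1}\Hm^{B_1}_A(x,z)\,G_A^B(z,w)\,\Hm^{B_1}_A(w,y)
\]
by the first and last visit to $B_1^c$, apply the annulus bound to the middle factor $G_A^B(z,w)$, and recombine to recover $G_A(x,y)$. Your route instead introduces an \emph{inner} ball $B'\subset B_1$, applies the previous lemma on the annulus $B_1\setminus B'$, and then does a case analysis depending on which of $x,y$ lie in $B'$, using one-sided first/last-visit identities. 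Both are valid; the paper's version avoids the case split and the auxiliary ball at the cost of a two-sided decomposition, while yours trades that for three separate but entirely routine cases.
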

\begin{proof}
By the last lemma, one can get, for any $z,w\in\outBd B_1$,
$$
G_A^B(z,w)\succeq G_A(z,w).
$$
For any $x,y\in B_1$, we have:
$$
G_A^B(x,y)=G_A^{B_1}(x,y)+\sum_{\gamma:x\rightarrow y, \gamma \text{ visits } B_1^c, \gamma\subseteq B}\BRW(\gamma).
$$
By considering the first and last visit in $B_1^c$, we have:
\begin{align*}
&\sum_{\gamma:x\rightarrow y, \gamma \text{ visits } B_1^c, \gamma\subseteq B}\BRW(\gamma)=
\sum_{z,w\in\outBd B_1}\Hm^{B_1}_A(x,z)G_A^B(z,w)\Hm^{B_1}_A(w,y)\\
&\;\stackrel{\eqref{t1}}{\succeq}\quad\quad\sum_{z,w\in\outBd B_1}\Hm^{B_1}_A(x,z)G_A(z,w)\Hm^{B_1}_A(w,y)=\sum_{\gamma:x\rightarrow y, \gamma \text{ visits } B_1^c}\BRW(\gamma).\\
\end{align*}
Hence, we have:
\begin{align*}
G_A^B(x,y)=&G_A^{B_1}(x,y)+\sum_{\gamma:x\rightarrow y, \gamma \text{ visits } B_1^c, \gamma\subseteq B}\BRW(\gamma)
\\
\succeq& G_A^{B_1}(x,y)+\sum_{\gamma:x\rightarrow y, \gamma \text{ visits } B_1^c}\BRW(\gamma)=G_A(x,y).
\end{align*}
\end{proof}
Now we can show Proposition \ref{pS_form}:
\begin{proof}[Proof of Proposition \ref{pS_form}]
Let $B=\Ball(1.1n)$. We have:
$$
\pS_A(x)=\sum_{\gamma:x\rightarrow A}\BRW(\gamma)=\sum_{z\in A}G_A(x,z)
\stackrel{\eqref{t2}}{\asymp}\sum_{z\in A}G^B_A(x,z)=\sum_{\gamma:x\rightarrow A,\gamma\subseteq B}\BRW(\gamma).
$$
\end{proof}

Now we turn to $\qS_A(x)$. The starting point is:
\begin{lemma}
For any $a\in\Z^d$, $A\ssubset\Z^d,B\subset \Z^d$, we have:
\begin{equation}\label{Cvv-pB}
\sum_{z\in B}G_A(x,z)\pS_A(z)=\sum_{\gamma:x\rightarrow A}\BRW(\gamma)
\sum_{i=0}^{|\gamma|}\mathbf{1}_{\gamma(i)\in B}.
\end{equation}
\end{lemma}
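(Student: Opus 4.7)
\medskip

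\noindent\textbf{Proof plan.} The identity is a pure bookkeeping statement about path weights, so my plan is to expand both sides as sums over lattice paths and check that the resulting sums are reorganizations of one another. There is no probabilistic estimate needed.

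First I would expand the left-hand side. By definition of the Green function with killing and by \eqref{p2} applied to the starting point $z$,
\begin{equation*}
G_A(x,z)=\sum_{\gamma_1:x\rightarrow z}\BRW(\gamma_1),\qquad \pS_A(z)=G_A(z,A)=\sum_{\gamma_2:z\rightarrow A}\BRW(\gamma_2),
\end{equation*}
so that
\begin{equation*}
\sum_{z\in B}G_A(x,z)\pS_A(z)=\sum_{z\in B}\sum_{\gamma_1:x\rightarrow z}\sum_{\gamma_2:z\rightarrow A}\BRW(\gamma_1)\BRW(\gamma_2).
\end{equation*}

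The next step is the multiplicativity of $\BRW$ under concatenation. From the explicit product formula \eqref{def-b}, the weight $\BRW(\gamma_1\circ\gamma_2)$ is the product of factors $(1-\KRW(v))\theta(\cdot)$ along the concatenated path; the split at the common vertex $z$ places the killing factor at $z$ exactly once (in the $\gamma_2$ factor) and the jump factors do not overlap, hence $\BRW(\gamma_1\circ\gamma_2)=\BRW(\gamma_1)\BRW(\gamma_2)$. Therefore every triple $(z,\gamma_1,\gamma_2)$ with $z\in B$, $\gamma_1:x\to z$, $\gamma_2:z\to A$ contributes weight $\BRW(\gamma)$, where $\gamma=\gamma_1\circ\gamma_2:x\to A$.

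Now I would reverse the bookkeeping: fix a path $\gamma:x\to A$ and count how many triples $(z,\gamma_1,\gamma_2)$ correspond to it. For each index $i\in\{0,1,\dots,|\gamma|\}$ with $\gamma(i)\in B$, setting $z=\gamma(i)$, $\gamma_1=\gamma|_{\{0,\dots,i\}}$ and $\gamma_2=\gamma|_{\{i,\dots,|\gamma|\}}$ gives exactly one such triple, and conversely every triple arises in this way from the split point $i=|\gamma_1|$. Hence the multiplicity with which $\BRW(\gamma)$ appears is precisely $\sum_{i=0}^{|\gamma|}\mathbf{1}_{\gamma(i)\in B}$, yielding
\begin{equation*}
\sum_{z\in B}G_A(x,z)\pS_A(z)=\sum_{\gamma:x\rightarrow A}\BRW(\gamma)\sum_{i=0}^{|\gamma|}\mathbf{1}_{\gamma(i)\in B},
\end{equation*}
which is \eqref{Cvv-pB}.

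The only point that needs care — and it is hardly an obstacle — is the endpoint convention in the concatenation: making sure the killing weight at the pivot vertex $z$ is counted once and only once. This is settled by the explicit product \eqref{def-b}, where the killing factor at the final vertex of a path is, by convention, absent; it is then supplied by $\gamma_2$, whose product starts at $z$. No analytic input (bounds on $\BRW$, the Green function, or the capacity) enters the proof.
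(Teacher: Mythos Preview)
Your proof is correct and follows exactly the same approach as the paper: expand $G_A(x,z)$ and $\pS_A(z)$ as sums over paths, use the multiplicativity $\BRW(\gamma_1\circ\gamma_2)=\BRW(\gamma_1)\BRW(\gamma_2)$, and then count for each $\gamma:x\to A$ the number of admissible split points in $B$. Your extra remark about the endpoint convention in \eqref{def-b} is a helpful clarification of the one step the paper leaves implicit.
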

\begin{proof}
\begin{align*}
\sum_{z\in B}G_A(x,z)\pS_A(z)
=&\sum_{z\in B}\sum_{\gamma_1:x\rightarrow z}\BRW(\gamma_1)\sum_{\gamma_2:z\rightarrow A}\BRW(\gamma_2)\\
&=\sum_{z\in B}\sum_{\gamma_1:x\rightarrow z}\sum_{\gamma_2:z\rightarrow A}\BRW(\gamma_1)\BRW(\gamma_2)\\
&=\sum_{z\in B}\sum_{\gamma_1:x\rightarrow z}\sum_{\gamma_2:z\rightarrow A}\BRW(\gamma_1\circ\gamma_2)\\
&=\sum_{\gamma:x\rightarrow A}\BRW(\gamma)\cdot\sum_{i=0}^{|\gamma|}\mathbf{1}_{\gamma(i)\in B}.\\
\end{align*}
The last equality is due to the fact that for any $\gamma:x\rightarrow A$, there are exactly $\sum_{i=0}^{|\gamma|}\mathbf{1}_{\gamma(i)\in B}$ ways to decompose $\gamma$ to the composite of two paths $\gamma_1$ and $\gamma_2$ such that the common point of $\gamma_1$ and $\gamma_2$ is in $B$.
\end{proof}

\begin{corollary}
\begin{equation}\label{q-gamma}
\qS_A(x)\asymp \sum_{\gamma:x\rightarrow A}[\gamma]\cdot\BRW(\gamma).
\end{equation}
\end{corollary}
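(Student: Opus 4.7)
The plan is to chain together three ingredients that are already in hand: the formula \eqref{for-q2} expressing $\qS_A$ as a convolution of $G_A$ against $\rS_A$, the comparison $\rS_A \asymp \pS_A$ from \eqref{r-p}, and the path-counting identity \eqref{Cvv-pB} applied with the trivial choice $B = \Z^d$.

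First, I would start from \eqref{for-q2},
\[
\qS_A(x) = \sum_{y \in \Z^d} G_A(x,y)\,\rS_A(y),
\]
and replace $\rS_A$ by $\pS_A$ using \eqref{r-p}, which holds uniformly in $y$ with constants depending only on $d,\mu,\theta$. This gives
\[
\qS_A(x) \asymp \sum_{y \in \Z^d} G_A(x,y)\,\pS_A(y).
\]

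Next, I would invoke \eqref{Cvv-pB} with $B = \Z^d$. Since every vertex of any path $\gamma$ lies in $\Z^d$, the indicator sum $\sum_{i=0}^{|\gamma|}\mathbf{1}_{\gamma(i)\in \Z^d}$ collapses to $|\gamma|+1 = [\gamma]$, so
\[
\sum_{y \in \Z^d} G_A(x,y)\,\pS_A(y) = \sum_{\gamma:x\to A} [\gamma]\cdot\BRW(\gamma).
\]
Chaining the two displays yields the claim $\qS_A(x)\asymp \sum_{\gamma:x\to A} [\gamma]\cdot\BRW(\gamma)$.

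There is essentially no obstacle here; the only point worth double-checking is that the comparison $\rS_A(y)\asymp\pS_A(y)$ from \eqref{r-p} comes with constants independent of $y$ and $A$, so that it can be pushed inside the infinite sum over $y$. Given the formulation in Section 8 of \cite{Z161}, this is the case, and the argument is just a three-line calculation.
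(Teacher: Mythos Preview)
Your proposal is correct and matches the paper's own proof essentially line for line: combine \eqref{for-q2} with \eqref{r-p} to get $\qS_A(x)\asymp\sum_{y}G_A(x,y)\pS_A(y)$, then apply \eqref{Cvv-pB} with $B=\Z^d$ so that the indicator sum becomes $[\gamma]$. There is nothing to add.
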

\begin{proof}
By \eqref{for-q2} and \eqref{r-p}, we have:
\begin{equation}\label{q=cvv-p}
\qS_A(x)\asymp \sum_{z\in\Z^d}G_A(x,z)\pS_A(z).
\end{equation}
By the last lemma, we have $\sum_{z\in\Z^d}G_A(x,z)\pS_A(z)=\sum_{\gamma:x\rightarrow A}[\gamma]\cdot\BRW(\gamma)$.
\end{proof}

\begin{lemma}\label{qS_form0}
For any $n$ sufficiently large, $A\subset \Ball(n), x\in \Z^d$ with $\|x\|\geq 1.1n$, we have:
\begin{equation}\label{qS-form0}
\qS_A(x)\asymp \sum_{\gamma:x\rightarrow A,\gamma\subseteq\Ball(3\|x\|)}[\gamma]\cdot\BRW(\gamma).
\end{equation}
\end{lemma}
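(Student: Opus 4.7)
My plan is to follow the pattern of the proof of Proposition \ref{pS_form}: turn the restricted path sum into a Green-function sum via a vertex-splitting identity, then use Lemma \ref{lem-restriction} to replace $G_A^{\Ball(3R)}$ by $G_A$ up to constants. Write $R=\|x\|$ and $D=\Ball(3R)$. The upper bound is immediate, since dropping the constraint $\gamma\subseteq D$ can only enlarge the sum: by \eqref{q-gamma},
\begin{equation*}
\sum_{\gamma:x\rightarrow A,\gamma\subseteq D}[\gamma]\cdot\BRW(\gamma)\leq \sum_{\gamma:x\rightarrow A}[\gamma]\cdot\BRW(\gamma)\asymp\qS_A(x).
\end{equation*}

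For the lower bound, splitting each $\gamma\subseteq D$ at each of its $[\gamma]$ vertices $z$ (all of which lie in $D$ since $x\in D$ and $A\subseteq D$, with each piece inheriting the constraint $\subseteq D$) gives, by the same argument used for \eqref{Cvv-pB},
\begin{equation*}
\sum_{\gamma:x\rightarrow A,\gamma\subseteq D}[\gamma]\cdot\BRW(\gamma)=\sum_{z\in D}G_A^D(x,z)\,G_A^D(z,A).
\end{equation*}
I restrict the outer sum to $z\in\Ball(2R)$ and apply Lemma \ref{lem-restriction} with base parameter $\tilde n=2R$, $\lambda=0.1$, $\iota=0.4$ — so that $(1+\lambda+\iota)\tilde n=3R$ exactly, $A\subseteq\Ball(n)\subseteq\Ball(\tilde n)$ (since $n\leq R\leq\tilde n$), and both $(x,z)$ and $(z,w)$ with $w\in A$ lie in $\Ball((1+\lambda)\tilde n)=\Ball(2.2R)$. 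This yields $G_A^D(x,z)\succeq G_A(x,z)$ and, after summation on $w\in A$, $G_A^D(z,A)\succeq\pS_A(z)$, hence
\begin{equation*}
\sum_{\gamma:x\rightarrow A,\gamma\subseteq D}[\gamma]\cdot\BRW(\gamma)\succeq\sum_{z\in\Ball(2R)}G_A(x,z)\pS_A(z).
\end{equation*}

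What remains is to show this last sum is $\succeq\qS_A(x)$. Since $\qS_A(x)\asymp\sum_{z\in\Z^d}G_A(x,z)\pS_A(z)$ by \eqref{q=cvv-p}, the plan is to verify that both the in-ball contribution and the tail contribution are of order $\BCap(A)R^{4-d}$. For the tail $\|z\|>2R$, $z$ is far enough from $A$ that $\pS_A(z)\preceq\BCap(A)/\|z\|^{d-2}$ by \eqref{bd-pp} and $G_A(x,z)\leq g(x,z)\preceq\|z\|^{2-d}$, and the standard integral $\int_{2R}^\infty r^{3-d}\,dr$ produces $\BCap(A)R^{4-d}$ for $d\geq 5$. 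For the matching lower bound on the in-ball term I restrict further to the annulus $\{z:1.5R\leq\|z\|\leq 2R\}$, on which $\|z\|\asymp\|x-z\|\asymp\dist(z,A)\asymp R$; Lemma \ref{bd_Green} and \eqref{bd-pp} then give $G_A(x,z)\pS_A(z)\asymp \BCap(A)R^{4-2d}$ pointwise, which summed over the $\asymp R^d$ annulus points yields $\succeq\BCap(A)R^{4-d}$. The tail is therefore at most a constant multiple of the in-ball sum, so the in-ball sum is comparable to the total and hence to $\qS_A(x)$. The main obstacle is exactly this final step: one must pick an annular region on which $G_A$, $g$, and $\pS_A$ all satisfy their sharp asymptotics with constants uniform in the ratio $R/n\in[1.1,\infty)$, which is why $\{1.5R\leq\|z\|\leq 2R\}$ is chosen — it sits well outside $\Ball((1+\lambda)\Rad A)$ for some fixed $\lambda>0$ and at distance $\succeq \diam A$ from $A$, uniformly in $R/n$.
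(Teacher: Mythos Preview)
Your proof is correct and follows essentially the same approach as the paper: both arguments use the vertex-splitting identity to rewrite the restricted path sum as $\sum_{z}G_A^D(x,z)G_A^D(z,A)$, apply Lemma~\ref{lem-restriction} to pass to unrestricted Green functions on $\Ball(2\|x\|)$, and then compare the in-ball sum to $\qS_A(x)$ via the annulus estimate on $\{1.5\|x\|\le\|z\|\le 2\|x\|\}$ (using Lemma~\ref{bd_Green} and \eqref{bd-pp}) together with the tail bound on $\Ball(2\|x\|)^c$. The only difference is the direction of presentation---you go from the restricted sum down to $\qS_A$, the paper goes from $\qS_A$ up to the restricted sum---and you make the parameters in Lemma~\ref{lem-restriction} explicit, which the paper leaves implicit.
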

\begin{proof}
The part of '$\succeq$' is trivial by the last corollary. Hence we only need to prove the other part.
As in the last corollary, we have:
\begin{equation*}
\qS_A(x)\asymp \sum_{z\in\Z^d}G_A(x,z)\pS_A(z).
\end{equation*}
First by \eqref{green},\eqref{bd-pp} and \eqref{bd-Green}, one can see that:
\begin{align*}
\sum_{z\in\Ball(2\|x\|)\setminus\Ball(1.5\|x\|)}G_A(x,z)\pS_A(z)&\asymp \sum_{z\in\Ball(2\|x\|)\setminus\Ball(1.5\|x\|)} g(x,z)\frac{\BCap(A)}{(\dist(z,A))^{d-2}}\\
&\asymp \sum_{z\in\Ball(2\|x\|)\setminus\Ball(1.5\|x\|)} \frac{1}{|x|^{d-2}}\frac{\BCap(A)}{|x|^{d-2}}\\
&\asymp |x|^{d}\frac{1}{|x|^{d-2}}\frac{\BCap(A)}{|x|^{d-2}}= \frac{\BCap(A)}{|x|^{d-4}}.\\
\end{align*}
Similarly, we can get:
\begin{align*}
\sum_{z\in\Ball(2\|x\|)^c}G_A(x,z)\pS_A(z)&\asymp \sum_{z\in\Ball(2\|x\|)^c} g(x,z)\frac{\BCap(A)}{(\dist(z,A))^{d-2}}\\
&\asymp \sum_{z\in\Ball(2\|x\|)^c} \frac{1}{|z|^{d-2}}\frac{\BCap(A)}{|z|^{d-2}}\\
&\asymp \BCap(A)\sum_{z\in\Ball(2\|x\|)^c} \frac{1}{|z|^{2d-4}}\asymp \frac{\BCap(A)}{|x|^{d-4}}.\\
\end{align*}
Hence we have: $$
\qS_A(x)\asymp\sum_{z\in\Z^d}G_A(x,z)\pS_A(z)\asymp\sum_{z\in\Ball(2\|x\|)}G_A(x,z)\pS_A(z).
$$
By Lemma \ref{lem-restriction}, we have (let $B=\Ball(3\|x\|)$):
\begin{align*}
\sum_{z\in\Ball(2\|x\|)}G_A(x,z)\pS_A(z)&=\sum_{z\in\Ball(2\|x\|)}G_A(x,z)\sum_{y\in A}G_A(z,y)\\
&\asymp\sum_{z\in\Ball(2\|x\|)}G_A^B(x,z)\sum_{y\in A}G^B_A(z,y)\\
&=\sum_{z\in\Ball(2\|x\|)}\sum_{\gamma_1:x\rightarrow z,\gamma_1\subseteq B}\BRW(\gamma_1)\sum_{\gamma_2:z\rightarrow A,\gamma_2\subseteq B}\BRW(\gamma_2)\\
&=\sum_{z\in\Ball(2\|x\|)}\sum_{\gamma_1:x\rightarrow z,\gamma_1\subseteq B}\sum_{\gamma_2:z\rightarrow A,\gamma_2\subseteq B}\BRW(\gamma_1\circ\gamma_2)\\
&\leq \sum_{\gamma:x\rightarrow A,\gamma\subseteq B}[\gamma]\BRW(\gamma).\\
\end{align*}
This completes the proof.
\end{proof}

\begin{proof}[Proof of Proposition \ref{qS_form}]
Let $B=\Ball(1.1n)$ and $B'=\Ball(4n)$. We have:
\begin{equation*}
\qS_A(x)\asymp\sum_{\gamma:x\rightarrow A}[\gamma]\BRW(\gamma)
=\sum_{\gamma:x\rightarrow A, \gamma\subseteq B}[\gamma]\BRW(\gamma)
+\sum_{\gamma:x\rightarrow A, \gamma\text{ visits } B^c}[\gamma]\BRW(\gamma).
\end{equation*}
By considering the first visit of $B^c$, the second term is equal to:
\begin{align*}
&\sum_{y\in \outBd B}\sum_{\gamma_1:x\rightarrow y, \gamma_1\subseteq B}
\sum_{\gamma_2:y\rightarrow A}(|\gamma_1|+[\gamma_2])(\BRW(\gamma_1)\BRW(\gamma_2))=\\
&\sum_{y\in \outBd B}\sum_{\gamma_1:x\rightarrow y, \gamma_1\subseteq B}|\gamma_1|\BRW(\gamma_1)
\sum_{\gamma_2:y\rightarrow A}\BRW(\gamma_2)+\sum_{y\in \outBd B}
\sum_{\gamma_1:x\rightarrow y, \gamma_1\subseteq B}\BRW(\gamma_1)
\sum_{\gamma_2:y\rightarrow A}[\gamma_2]\BRW(\gamma_2)\\
&\stackrel{\eqref{p2}\eqref{q-gamma}}{\asymp}\sum_{y\in \outBd B}\sum_{\gamma_1:x\rightarrow y, \gamma_1\subseteq B}|\gamma_1|\BRW(\gamma_1)
\pS_A(y)+\sum_{y\in \outBd B}
\sum_{\gamma_1:x\rightarrow y, \gamma_1\subseteq B}\BRW(\gamma_1)\qS_A(y)\\
&\stackrel{\eqref{qS-form0},\eqref{pS-form}}{\asymp}\sum_{y\in \outBd B}\sum_{\gamma_1:x\rightarrow y, \gamma_1\subseteq B}|\gamma_1|\BRW(\gamma_1)
\sum_{\gamma_2:y\rightarrow A,\gamma_2\subseteq B'}\BRW(\gamma_2)\\
&\quad \quad\quad \quad\quad\quad\quad\quad+\sum_{y\in \outBd B}\sum_{\gamma_1:x\rightarrow y, \gamma_1\subseteq B}\BRW(\gamma_1)
\sum_{\gamma_2:y\rightarrow A,\gamma_2\subseteq B'}[\gamma_2]\BRW(\gamma_2)\\
&=\sum_{y\in \outBd B}\sum_{\gamma_1:x\rightarrow y, \gamma_1\subseteq B}\sum_{\gamma_2:y\rightarrow A,\gamma_2\subseteq B'}(|\gamma_1|+[\gamma_2])(\BRW(\gamma_1)\BRW(\gamma_2))\\
&= \sum_{\gamma:x\rightarrow A, \gamma\text{ visits } B^c,\gamma\subseteq B'}[\gamma]\BRW(\gamma).\\
\end{align*}
Hence, we get
\begin{equation*}
\qS_A(x)\asymp\sum_{\gamma:x\rightarrow A, \gamma\subseteq B}[\gamma]\BRW(\gamma)
+\sum_{\gamma:x\rightarrow A, \gamma\text{ visits } B^c,\gamma\subseteq B'}[\gamma]\BRW(\gamma)=\sum_{\gamma:x\rightarrow A, \gamma\subseteq B'}[\gamma]\BRW(\gamma).
\end{equation*}
This completes the proof.
\end{proof}

\section{Visiting probability by an infinite snake.}
In this section we establish the following bounds analogous to \eqref{bd-pp}:
\begin{theorem}\label{bd-infinite}
For any $A\ssubset \Z^d$ and $x\in\Z^d$ with $\|x\|\geq 2\Rad(A)$, we have:
\begin{equation}\label{bd_infinite}
\qS_A(x)\asymp\frac{\BCap(A)}{(\dist(x,A))^{d-4}}.
\end{equation}
\end{theorem}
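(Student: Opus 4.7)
The plan is to exploit the identity $\qS_A(x)=\sum_y G_A(x,y)\rS_A(y)$ from \eqref{for-q2} combined with \eqref{r-p} to get
\[ \qS_A(x)\asymp \sum_{y\in\Z^d}G_A(x,y)\pS_A(y), \]
with implicit constants uniform in $A$ and $x$, and then estimate the right-hand side by splitting the sum at $B=\Ball(2\|x\|)$. Writing $n=\Rad(A)$, the hypothesis $\|x\|\geq 2n$ gives $A\subset B$ and $\dist(x,A)\asymp\|x\|$, so it suffices to show $\qS_A(x)\asymp \BCap(A)\|x\|^{4-d}$.

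For the contribution from $y\in B$ I would invoke \eqref{Cvv-p}: since $(\diam B)^2\asymp\|x\|^2$, and \eqref{bd-pp} applied at $x$ (legitimate because $\dist(x,A)\geq \|x\|/2\geq \tfrac12\diam(A)$) gives $\pS_A(x)\asymp\BCap(A)\|x\|^{2-d}$, one obtains $\sum_{y\in B}G_A(x,y)\pS_A(y)\preceq \|x\|^2 \pS_A(x)\asymp \BCap(A)\|x\|^{4-d}$. For $y\notin B$ I would use $G_A(x,y)\leq g(x,y)\asymp \|x-y\|^{2-d}$ via \eqref{green}, together with $\|x-y\|\asymp\|y\|$ (since $\|y\|\geq 2\|x\|$); also $\dist(y,A)\asymp\|y\|\succeq \diam(A)$, so \eqref{bd-pp} yields $\pS_A(y)\asymp\BCap(A)\|y\|^{2-d}$. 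The resulting tail sum is
\[ \preceq \BCap(A)\sum_{\|y\|\geq 2\|x\|}\|y\|^{4-2d}\asymp \BCap(A)\|x\|^{4-d}, \]
and convergence of this sum is precisely where the hypothesis $d\geq 5$ enters.

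For the matching lower bound I would restrict the sum to the annulus $D=\{y : 2\|x\|\leq \|y\|\leq 3\|x\|\}$. On $D$, Lemma \ref{bd_Green} (applied with $\lambda=\tfrac12$, permissible because $\|x\|,\|y\|>\tfrac32\Rad(A)$) gives $G_A(x,y)\succeq g(x,y)\asymp \|x\|^{2-d}$, and \eqref{bd-pp} gives $\pS_A(y)\succeq \BCap(A)\|x\|^{2-d}$ since $\dist(y,A)\asymp\|x\|\succeq \diam(A)$. Because $|D|\asymp\|x\|^d$, this produces $\qS_A(x)\succeq \|x\|^d\cdot\|x\|^{2-d}\cdot\BCap(A)\|x\|^{2-d}=\BCap(A)\|x\|^{4-d}$, matching the upper bound. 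The only genuinely nontrivial input is \eqref{Cvv-p}, whose proof was postponed to this section and must therefore be supplied first; once that is in hand, the rest of the argument collapses to two standard Green-function sums, and the main thing to watch is that every implicit constant (in \eqref{bd-pp}, \eqref{r-p}, Lemma \ref{bd_Green}, and \eqref{Cvv-p}) can be chosen uniformly in $A$, which is exactly what those statements provide.
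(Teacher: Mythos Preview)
Your lower bound is essentially identical to the paper's (same annulus argument), and your tail estimate for $\|y\|\geq 2\|x\|$ is also fine. The issue is with the upper bound over $B=\Ball(2\|x\|)$, where you rely on \eqref{Cvv-p} as a black box ``to be supplied first.''

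The subtlety is that in the paper the logical order runs the other way. The paper does \emph{not} first prove \eqref{Cvv-p} and then deduce the upper bound; instead it first proves an auxiliary lemma (for $A\subset\Ball(n)$ and $y\in\Ball(n)$, $\qS_A(y)\preceq n^2\,\pS_A(y)$) directly from \eqref{Cvv-q}---which \emph{is} already available---together with the restriction estimate \eqref{qS-form} and a Cauchy--Schwarz trick on the weighted path sums $\sum_\gamma [\gamma]^k\,\BRW(\gamma)$. That lemma, applied with $n=\|x\|$, immediately yields the upper bound in \eqref{bd_infinite}. Only \emph{afterwards} is \eqref{Cvv-p} established, and its proof uses precisely this lemma. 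So your proposed route is Lemma~$\Rightarrow$~\eqref{Cvv-p}~$\Rightarrow$~upper bound, whereas the paper does Lemma~$\Rightarrow$~upper bound directly and gets \eqref{Cvv-p} as a by-product.

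This is not a fatal gap---there is no actual circularity, since the lemma is proved from \eqref{Cvv-q}, not from the theorem---but you should be aware that \eqref{Cvv-p} is not provable by the same flag-counting argument that handled \eqref{Cvv-q} (there is no analogue of \eqref{for-q1} for $\pS_A$), and the real work is the Cauchy--Schwarz step hidden inside its proof. Once you unpack that, you will see that invoking \eqref{Cvv-p} is a detour: the inequality $\qS_A(x)\preceq\|x\|^2\,\pS_A(x)$ you actually need falls out one step earlier.
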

\begin{remark}
Similarly to \eqref{bd-pp}, by cutting $A$ into small pieces, one can replace $\|x\|\geq 2\Rad(A)$ by $\dist(x,A)\geq \epsilon \diam(A)$, for any $\epsilon>0$.
\end{remark}
\begin{proof}
It suffices to show the case when $\Rad (A)$ is sufficiently large since we know the asymptotical behavior when $x$ is far away (\eqref{q-lim}). The part for $\succeq$ is straightforward and similar to the first part of the proof of Lemma \ref{qS_form0}:
\begin{align*}
\qS_A(x)&\stackrel{\eqref{q=cvv-p}}{\asymp}\sum_{z\in\Z^d}G_A(x,z)\pS_A(z)\geq\sum_{2\|x\|\leq\|z\|\leq4\|x\|}G_A(x,z)\pS_A(z)\\
\stackrel{\eqref{bd-pp}\eqref{bd-Green}}{\asymp} &\sum_{2\|x\|\leq\|z\|\leq4\|x\|}\frac{1}{|x-z|^{d-2}}\frac{\BCap(A)}{(\dist(z,A))^{d-2}}
\asymp \sum_{2\|x\|\leq\|z\|\leq4\|x\|}\frac{1}{|z|^{d-2}}\frac{\BCap(A)}{|z|^{d-2}}\\
\asymp&|x|^{d}\frac{1}{|x|^{d-2}}\frac{\BCap(A)}{|x|^{d-2}}=\frac{\BCap(A)}{|x|^{d-4}}
\asymp \frac{\BCap(A)}{(\dist(x,A))^{d-4}}.\\
\end{align*}
The other part can be implied by \eqref{bd-pp} and the following lemma (let $n=\|x\|$).
\end{proof}
\begin{lemma}
For any $n\in\N^+$ sufficiently large, $A\subset\Ball(n), y\in\Ball(n)$, we have:
\begin{equation}\label{q-p}
\qS_A(y)\preceq n^2 \pS_A(y).
\end{equation}
\end{lemma}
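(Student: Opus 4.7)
The plan is a short assembly of three ingredients already established in Sections~5 and~6: Proposition \ref{qS_form}, the identity \eqref{Cvv-pB}, and the convolved-sum bound \eqref{Cvv-p}.

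First, I would use Proposition \ref{qS_form} to express
$$
\qS_A(y)\asymp\sum_{\gamma:y\to A,\,\gamma\subseteq\Ball(4n)}[\gamma]\cdot\BRW(\gamma).
$$
For every $\gamma$ appearing in this sum, every vertex $\gamma(i)$ lies in $\Ball(4n)$, so $[\gamma]=\sum_{i=0}^{|\gamma|}\mathbf{1}_{\gamma(i)\in\Ball(4n)}$. Dropping the constraint $\gamma\subseteq\Ball(4n)$ only enlarges the sum, so
$$
\qS_A(y)\preceq\sum_{\gamma:y\to A}\BRW(\gamma)\sum_{i=0}^{|\gamma|}\mathbf{1}_{\gamma(i)\in\Ball(4n)}.
$$

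Next, identity \eqref{Cvv-pB} with $B=\Ball(4n)$ rewrites the right-hand side as $\sum_{z\in\Ball(4n)}G_A(y,z)\pS_A(z)$. Since $A\subset\Ball(n)\subset\Ball(4n)$, inequality \eqref{Cvv-p} applies with that $B$ and yields
$$
\sum_{z\in\Ball(4n)}G_A(y,z)\pS_A(z)\preceq(\diam\Ball(4n))^2\,\pS_A(y)\asymp n^2\,\pS_A(y).
$$
Chaining the displayed inequalities gives $\qS_A(y)\preceq n^2\,\pS_A(y)$, as required.

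The present lemma is therefore an almost immediate consequence of the already-assembled restriction machinery; the real work lies in the deferred proof of \eqref{Cvv-p} (which is itself proved by the flag/last-visit decomposition that was used for \eqref{Cvv-q}, adapted to $\pS_A$), not in this final step. If one wished to avoid invoking \eqref{Cvv-p}, a direct alternative would be to estimate $\sum_{z\in\Ball(4n)}G_A(y,z)\pS_A(z)$ by splitting the $z$-sum according to distance to $A$ and invoking \eqref{bd-pp} together with the Green-function bounds \eqref{green} and \eqref{bd-Green}; but this is strictly more computational than the three-line deduction above.
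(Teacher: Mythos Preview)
Your argument is circular. You invoke \eqref{Cvv-p}, but the paper's deferred proof of \eqref{Cvv-p} (appearing immediately after the present lemma in Section~6) begins ``When $x\in B$, by the last lemma\dots'': it uses \eqref{q-p} together with \eqref{q=cvv-p} to handle the case $x\in B$, and then reduces the case $x\notin B$ to that. So in the paper's logical order \eqref{Cvv-p} is downstream of \eqref{q-p}, not upstream; quoting it here begs the question.

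Your parenthetical claim that \eqref{Cvv-p} can instead be proved by adapting the flag/last-visit argument for \eqref{Cvv-q} does not go through as stated. That proof hinges on the representation \eqref{for-q1}, $\qS_A(x)=\sum_w g(x,w)\rS_A(w)\EsC^+_A(w)$, in which the \emph{unkilled} Green function $g$ appears; this is precisely what lets the last-flag decomposition collapse to the pure random-walk estimate \eqref{SRW1}. There is no analogous formula writing $\pS_A(x)$ as a $g$-convolution, so the same mechanism is unavailable for $\pS_A$.

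The paper breaks the circularity by using only \eqref{Cvv-q}, which is already established. Combining \eqref{Cvv-q} with \eqref{qS-form} (and the path-splitting identity) gives
\[
\sum_{\gamma:y\to A,\,\gamma\subseteq B}[\gamma]^2\,\BRW(\gamma)\preceq n^2\sum_{\gamma:y\to A,\,\gamma\subseteq B}[\gamma]\,\BRW(\gamma)
\]
with $B=\Ball(4n)$, and then Cauchy--Schwarz applied to $\sum_\gamma[\gamma]\BRW(\gamma)=\sum_\gamma\bigl([\gamma]\sqrt{\BRW(\gamma)}\bigr)\sqrt{\BRW(\gamma)}$ yields \eqref{ob} and hence \eqref{q-p}. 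Only once this lemma is in hand does the paper return and prove \eqref{Cvv-p}.
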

\begin{proof}
Let $B=\Ball(4n)$. By \eqref{qS-form} and \eqref{p2}, it suffices to prove:
\begin{equation}\label{ob}
\sum_{\gamma:y\rightarrow A,\gamma\subseteq B} [\gamma]\BRW(\gamma)\preceq
n^2\sum_{\gamma:y\rightarrow A,\gamma\subseteq B}\BRW(\gamma).
\end{equation}
By \eqref{Cvv-q} and \eqref{qS-form}, one can get:
\begin{equation}
\sum_{z\in B}G_A(y,z)\qS_A(z) \preceq
n^2\sum_{\gamma:y\rightarrow A,\gamma\subseteq B}[\gamma]\BRW(\gamma).
\end{equation}
For the left hand side, we have:
\begin{align*}
&\sum_{z\in B}G_A(y,z)\qS_A(z)\stackrel{\eqref{q-gamma}}{\asymp}
\sum_{z\in B}\sum_{\gamma_1:y\rightarrow z}\BRW(\gamma_1)
\sum_{\gamma_2:z\rightarrow A}[\gamma_2]\BRW(\gamma_2)\\
\geq&\sum_{z\in B}\sum_{\gamma_1:y\rightarrow z,\gamma_1\subseteq B}
\sum_{\gamma_2:z\rightarrow A,\gamma_2\subseteq B}[\gamma_2]\BRW(\gamma_1\circ\gamma_2)\\
=&\sum_{\gamma:y\rightarrow A,\gamma\subseteq B}(1+2+...+[\gamma])\BRW(\gamma)\asymp\sum_{\gamma:y\rightarrow A,\gamma\subseteq B}[\gamma]^2\BRW(\gamma).\\
\end{align*}
Hence, we have:
\begin{equation}\label{a3}
\sum_{\gamma:y\rightarrow A,\gamma\subseteq B}[\gamma]^2\BRW(\gamma)\preceq
n^2\sum_{\gamma:y\rightarrow A,\gamma\subseteq B}[\gamma]\BRW(\gamma).
\end{equation}
By Cauchy-Schwartz inequality:
\begin{align*}
\left(\sum_{\gamma:y\rightarrow A,\gamma\subseteq B} [\gamma]\BRW(\gamma)\right)^2&\leq
\left(\sum_{\gamma:y\rightarrow A,\gamma\subseteq B} [\gamma]^2\BRW(\gamma)\right)\cdot
\left(\sum_{\gamma:y\rightarrow A,\gamma\subseteq B} \BRW(\gamma)\right)\\
&\preceq n^2\sum_{\gamma:y\rightarrow A,\gamma\subseteq B}[\gamma]\BRW(\gamma)\cdot
\left(\sum_{\gamma:y\rightarrow A,\gamma\subseteq B} \BRW(\gamma)\right).\\
\end{align*}
Then \eqref{ob} follows and we complete the proof.
\end{proof}
\begin{proof}[Proof of \eqref{Cvv-p}]
When $x\in B$, by the last lemma (recall that $\qS_A(x)\asymp \sum_{z\in\Z^d}G_A(x,z)\pS_A(x)$),
we have the desired bound.
Now we assume $x\notin B$. By considering the first visit of $B$, we have
\begin{align*}
\sum_{z\in B}&G_A(x,z)\pS_A(z)
\stackrel{\eqref{Cvv-pB}}{=}\sum_{\gamma:x\rightarrow A}(\sum_{i=0}^{|\gamma|}\mathbf{1}_{\gamma(i)\in B})\BRW(\gamma)\\
=&\sum_{y \in B}\sum_{\gamma_1:x\rightarrow y,\gamma_1\subseteq B^c}
\sum_{\gamma_2:y\rightarrow A}\BRW(\gamma_1\circ \gamma_2)
(\sum_{i=0}^{|\gamma_2|}\mathbf{1}_{\gamma_2(i)\in B})\\
=&\sum_{y \in B}\sum_{\gamma_1:x\rightarrow y,\gamma_1\subseteq B^c}\BRW(\gamma_1)
\sum_{\gamma_2:y\rightarrow A}(\sum_{i=0}^{|\gamma_2|}\mathbf{1}_{\gamma_2(i)\in B})\BRW(\gamma_2)\\
\stackrel{\eqref{Cvv-pB}}{=}&\sum_{y \in B}\sum_{\gamma_1:x\rightarrow y,\gamma_1\subseteq B^c}\BRW(\gamma_1)
\sum_{z\in B}G_A(y,z)\pS_A(z)\\
\stackrel{(\ast)}{\preceq}&\sum_{y \in B}\sum_{\gamma_1:x\rightarrow y,\gamma_1\subseteq B^c}\BRW(\gamma_1) (\diam(B))^2 \pS_A(y)\\
=&(\diam(B))^2\sum_{y \in B}\sum_{\gamma_1:x\rightarrow y,\gamma_1\subseteq B^c}\BRW(\gamma_1) \pS_A(y)\\
=&(\diam(B))^2 \pS_A(x).\\
\end{align*}
$(\ast)$ is because we have proved that \eqref{Cvv-p} is true for $x\in B$ and for the last line, we use the First-Visit Lemma and \eqref{p2}.
\end{proof}

\section{Upper bounds for the probabilities of visiting two sets.}
In this section we aim to prove the following inequalities which we will use in the proof of Wiener's Test.
\begin{lemma}
For any disjoint nonempty subsets $A, B\ssubset\Z^d$ and $x\in \Z^d$, we have:
\begin{equation}\label{two1}
P(\Snake_x\text{ visits both }A \& B)\preceq \sum_{z\in\Z^d}G_{A\cup B}(x,z)\pS_A(z)\pS_B(z);
\end{equation}
\begin{multline}\label{two2}
P(\Snake_x^\infty\text{ visits both }A \& B)\preceq \\
\sum_{z\in\Z^d}G_{A\cup B}(x,z)
\left(\pS_A(z)\qS_B(z)+\qS_A(z)\pS_B(z)+P(\Snake'_z\text{ visits both }A \& B)\right).
\end{multline}
\end{lemma}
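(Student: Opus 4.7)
The plan is to apply Markov's inequality to the (a.s.\ unique) most recent common ancestor (MRCA), in the random tree, of the first-in-DFS $A$-visit and the first-in-DFS $B$-visit. On the event that the snake visits both, this MRCA $u^*$ satisfies the following weaker deterministic event: $u^*$ has two distinct children whose subtrees visit $A$ and $B$ respectively, and along the tree path $v_0=x, v_1, \ldots, v_n=u^*$, one has $v_i\notin A\cup B$ for $i<n$ and, at each such $v_i$, every DFS-left sibling of $v_{i+1}$ roots a subtree avoiding $A\cup B$. Hence $P(\Snake_x\text{ visits both})\le \E[\#\{u:\text{weak event}\}]$, and similarly for the infinite snake.

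To prove \eqref{two1}, I expand this expected count by summing over the walks $\gamma=(x_0,\ldots,x_n)=(x,\ldots,z)$ in $\Z^d$ traced by $v_0,\ldots,v_n$. At level $i<n$, set $q_i:=1-\sum_y\theta(y-x_i)\pS_{A\cup B}(y)$, the averaged probability that an independent $\mu$-GW subtree rooted at one child of $v_i$ avoids $A\cup B$. Marginalizing over the number $N_i\sim\mu$ of children of $v_i$, over the position $k_i$ of $v_{i+1}$ among them, and over the independent event that each of the $k_i-1$ left-sibling subtrees avoids $A\cup B$, produces
\[
\mathbf{1}_{v_i\notin A\cup B}\cdot\sum_{N\ge 1}\mu(N)\sum_{k=1}^{N}q_i^{k-1}=\mathbf{1}_{v_i\notin A\cup B}\sum_{\ell\ge 0}\widetilde{\mu}(\ell)q_i^{\ell}=1-\rS_{A\cup B}(v_i),
\]
the final equality being precisely the definition of $\rS$ via the adjoint snake. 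Multiplied by the jump weight $\theta(x_{i+1}-x_i)$ at each level and summed over lengths, this gives $\sum_\gamma\BRW(\gamma)=G_{A\cup B}(x,z)$. At $v_n=u^*$, the probability of having two distinct children whose subtrees visit $A$ and $B$ respectively is bounded via $\E[N_n(N_n-1)]=\sigma^2$ by $\sigma^2\sum_{y,y'}\theta(y-z)\theta(y'-z)\pS_A(y)\pS_B(y')\asymp\pS_A(z)\pS_B(z)$, yielding \eqref{two1}.

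For \eqref{two2}, use the backbone/bush decomposition of $\Snake_x^\infty$: a backbone random walk $(v_j)_{j\ge 0}$ with an independent bush $\Snake'_{v_j}$ grafted at each step. Let $J$ be the least index such that $\Snake'_{v_J}$ visits $A\cup B$. Either the MRCA lies inside $\Snake'_{v_J}$ (and then $\Snake'_{v_J}$ visits both), or the MRCA equals the backbone vertex $v_J$ itself (and then $\Snake'_{v_J}$ visits one of $A,B$ while the continuation $\Snake^\infty_{v_{J+1}}$ visits the other). These three cases carry conditional bounds $P(\Snake'_{v_J}\text{ visits both})$, $\rS_A(v_J)\qS_B(v_{J+1})$, and $\rS_B(v_J)\qS_A(v_{J+1})$, each multiplied by the killing factor $\prod_{i<J}(1-\rS_{A\cup B}(v_i))$ from the requirement that earlier bushes avoid $A\cup B$. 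Summing over $J$ and taking expectation over the backbone random walk produces $\sum_J \E[\mathbf{1}_{v_J=z}\prod_{i<J}(1-\rS_{A\cup B}(v_i))]=G_{A\cup B}(x,z)$; averaging $v_{J+1}$ over a single jump, then invoking \eqref{r-p} and the easy one-step regularity $\sum_y\theta(y-z)\qS_A(y)\asymp\qS_A(z)$ (immediate from $\qS_A\ge\rS_A$ and the recursion $\qS_A(x)=\rS_A(x)+(1-\rS_A(x))\sum_y\theta(y-x)\qS_A(y)$), delivers \eqref{two2}. The principal subtlety is the combinatorial identity displayed above: it converts the $\mu$-GW sibling structure into the adjoint-snake killing rate, which is precisely the mechanism that upgrades the naive bound in terms of $g(x,z)$ to the sharper one in terms of $G_{A\cup B}(x,z)$, mirroring the derivation of $\pS_A(x)=G_A(x,A)$ in \cite{Z161}.
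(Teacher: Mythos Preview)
Your approach is essentially the paper's: for \eqref{two1} you single out a distinguished common ancestor, convert the left-sibling avoidance condition into the killing factor via the identity $\sum_{N}\mu(N)\sum_{k=1}^{N}q^{k-1}=\sum_{\ell}\widetilde\mu(\ell)q^{\ell}=1-\rS_{A\cup B}(\cdot)$ (this is precisely the computation the paper invokes by citing Section~3.2 of \cite{Z161}), and for \eqref{two2} your first-bush-to-hit decomposition along the backbone is identical to the paper's. The only cosmetic difference is the choice of special vertex: the paper takes the DFS-first among \emph{all} MRCAs of pairs in $V_A\times V_B$, whereas you take the MRCA of the first $A$-visit and the first $B$-visit; either works.

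There is one genuine but easily repaired gap. Your ``weak event'' asserts that $u^*$ has two distinct children whose subtrees hit $A$ and $B$; this fails when $u^*$ coincides with the first $A$-visit (so $u^*\in A$) and is a strict ancestor of the first $B$-visit --- then only one child's subtree need hit $B$, and no child's subtree need hit $A$ at all. The paper handles this case explicitly: when $\widehat\gamma\in A$ one has $\pS_A(\widehat\gamma)=1$, and the required event is merely that the subtree rooted at $u^*$ reaches $B$, giving probability $\le\pS_B(\widehat\gamma)=\pS_A(\widehat\gamma)\pS_B(\widehat\gamma)$. Relatedly, your claimed one-step regularity $\sum_y\theta(y-z)\pS_A(y)\asymp\pS_A(z)$ follows from the identity $\pS_A(z)=(1-\rS_A(z))\sum_y\theta(y-z)\pS_A(y)$ only for $z\notin A$; once the boundary case $z\in A\cup B$ is treated separately as above, the argument goes through.
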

\begin{proof}
\eqref{two2} is a bit easier and we prove it first. When an infinite snake $\Snake_x^\infty=(T,\Snake_T)$ visits both $A$ and $B$, let $u$ be the first vertex in the spine such that the image of the bush graft to $u$ under $\Snake_T$ intersects $A\cup B$. Assume $(v_0,\dots, v_{k})$ is the unique simple path in the spine from $o$ to $u$. Define $\bb_{(A,B)}(\Snake_x^\infty)=(\Snake_T(v_0),\dots,\Snake_T(v_{k}))$. For any path $\gamma=(\gamma(0),\dots,\gamma(k))$ starting from $x$ with length $|\gamma|=k$, we would like to estimate $P(\bb_{(A,B)}(\Snake_x^\infty)=\gamma)$. If we can show that:
\begin{multline}\label{temp1}
P(\bb_{(A,B)}(\Snake_x^\infty)=\gamma)\preceq\\
\BRW(\gamma)\left(\pS_A(\widehat{\gamma})\qS_B(\widehat{\gamma})+\qS_A(\widehat{\gamma})\pS_B(\widehat{\gamma})
+P(\Snake'_{\widehat{\gamma}}\text{ visits both }A \& B)\right),
\end{multline}
then by summation, one can get \eqref{two2}.

Now we argue that \eqref{temp1} is correct. Let $\mathbf{t}$ be the bush grafted to $u$. There are three possibilities: $\Snake_T(\mathbf{t})$ visits $A$ but not $B$, visits $B$ but not $A$ or visits both $A$ and $B$. For the first one, to guarantee $\bb_{(A,B)}(\Snake_x^\infty)=\gamma$, we need three conditions to be true. The first is that $\Snake_T$ maps $(v_0,\dots, v_{k})$ to $\gamma$ and that the image of each bush grafted to $v_i$ does not intersect $A\cup B$, for $i=0,\dots,k-1$. The probability of this condition being true is $\BRW(\gamma)$. The second condition is that $\Snake_T(\mathbf{t})$ intersects $A$ but not $B$. The probability of this condition being true is at most $\rS_A(\widehat{\gamma})\asymp\pS_A(\widehat{\gamma})$. The last condition is that the image of the bushes after $u$ intersects $B$. The probability of this condition being true is at most $\qS_B(\widehat{\gamma})$. Note that for fixed $\gamma$, the three conditions are independent. Hence we have:
$$
P(\bb_{(A,B)}(\Snake_x^\infty)=\gamma, \Snake_T(\mathbf{t})\text{ visits } A \text{ not } B)\leq
\BRW(\gamma)\pS_A(\widehat{\gamma})\qS_B(\widehat{\gamma}).
$$
Similarly, one can get the other two inequalities. This completes the proof of \eqref{two2}.

For \eqref{two1}, we use a similar idea. When a snake $\Snake_x=(T,\Snake_T)$ visits both $A$ and $B$, then $V_A\DeFine\{v\in T: \Snake_T(v)\in A\}$ and $V_B\DeFine\{v\in T: \Snake_T(v)\in B\}$ are nonempty. We call a vertex $v\in T$ good, if $v$ is the last common ancestor for some $u_1\in V_A$ and $u_2\in V_B$ (any vertex is regarded as an ancestor of itself). Since for any $u_1\in V_A$ and $u_2\in V_B$, they have the unique last common ancestor. Hence there exists at least one good vertex and we choose the first good one (due to the default order, Depth-First order), say $u$. Assume $\gamma=(v_0,\dots,v_k)$ is the unique simple path in $T$ from the root $o$ to $u$. Define $\bb_{(A,B)}(\Snake_x)=(\Snake_x(v_0),\dots,\Snake_x(v_k))$. As before, we would like to estimate $P(\bb_{(A,B)}(\Snake_x)=\gamma)$, for a fixed path $\gamma=(\gamma(0)),\dots,\gamma(k)$ starting from $x$, with length $|\gamma|=k$. We argue that:
\begin{equation}\label{tt}
P(\bb_{(A,B)}(\Snake_x)=\gamma)\preceq \BRW(\gamma)\pS_A(\widehat{\gamma})\pS_B(\widehat{\gamma}).
\end{equation}

Since $u$ is the first good vertex, one can see that all vertices in $V_A\cup V_B$ are descendants of $u$ or $u$ itself. In particular,
\begin{align}\label{con1}
\mathrm{any~vertex~before~}u\mathrm{~is~not~in~} V_A\cup V_B.
\end{align}
Here, 'before' is due to the Depth-First search order.
This is the first necessary condition for the event $\bb_{(A,B)}(\Snake_x)=\gamma$ being true. Similar to the computations in Section 3.2, the probability for \eqref{con1} being true is $\BRW(\gamma)$. Note that this condition just depends on $(T\setminus T_{u}, \Snake_T|_{T\setminus T_{u}})$, where $T_{u}$ is the subtrees generated by $u$ and its descendants, and $T\setminus T_{u}$ is the tree generated by $u$ and those vertices outside $T_{u}$.

On the other hand, since $u$ is the last common ancestor for some $u_1\in V_A$ and $u_2\in V_B$, when $u\notin V_A\cup V_B$, $u$ must have two different children $u^1$ and $u^2$, such that $\Snake_T(T_{u^1})\cap A\neq \emptyset$ and $\Snake_T(T_{u^2})\cap B\neq \emptyset$. This is the second necessary condition for the event $\bb_{(A,B)}(\Snake_x)=\gamma$ being true. Note that for fixed $\gamma$, this condition is independent of \eqref{con1}, and its probability is at most
\begin{align*}
\sum_{n=2}^{\infty}\mu(n)n(n-1)\pS_A(\widehat{\gamma})\pS_B(\widehat{\gamma})
=\sigma^2\pS_A(\widehat{\gamma})\pS_B(\widehat{\gamma}).
\end{align*}
When $u\in V_A\cup V_B$, say $\in A$, then similarly, $u$ must have a descendant mapped into $B$. The probability for this condition is: $\pS_B(\widehat{\gamma})=\pS_A(\widehat{\gamma})\pS_B(\widehat{\gamma})$.
Combining the two conditions one can get \eqref{tt}. By summation, one can get \eqref{two1}. This completes the proof of \eqref{two1}.
\end{proof}

We require the assumption of the finite third moment of $\mu$ only for the following lemma.
\begin{lemma}\label{3-moment}
When $\mu$ has finite third moment, we have:
\begin{equation}\label{adjoint}
P(\Snake_x\text{ visits both }A \& B)\asymp P(\Snake'_x\text{ visits both }A \& B).
\end{equation}
\end{lemma}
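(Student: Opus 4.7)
Plan. The approach is analytic. First, by conditioning on the offspring count at the root, the iid $\theta$-displacements of its children, and the independence of the resulting subtrees, one derives for $x\notin A\cup B$ the closed-form expressions
\begin{align*}
P(\Snake_x\text{ vis both})&=1-f(1-h_A)-f(1-h_B)+f(1-h_A-h_B+h_{AB}),\\
P(\Snake'_x\text{ vis both})&=1-\widetilde f(1-h_A)-\widetilde f(1-h_B)+\widetilde f(1-h_A-h_B+h_{AB}),
\end{align*}
where $f,\widetilde f$ are the probability generating functions of $\mu$ and $\widetilde\mu$, and
\[
h_A(x)\DeFine\sum_y\theta(y-x)\pS_A(y),\ h_B(x)\DeFine\sum_y\theta(y-x)\pS_B(y),\ h_{AB}(x)\DeFine\sum_y\theta(y-x)P(\Snake_y\text{ vis both}).
\]
The case $x\in A\cup B$ reduces, after substitution, to $\pS_\bullet(x)\asymp\rS_\bullet(x)$, which is already known via \eqref{r-p}.

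Next, I bound both probabilities above and below by $h_{AB}(x)+h_A(x)h_B(x)$. A Bonferroni-type argument yields $P(\text{vis both}\mid N\text{ children})\le Nh_{AB}+N(N-1)h_Ah_B$, and averaging gives
\[
P(\Snake_x\text{ vis both})\le h_{AB}+\sigma^2 h_Ah_B,\quad P(\Snake'_x\text{ vis both})\le (\sigma^2/2)h_{AB}+M_2\,h_Ah_B,
\]
with $M_2=\E_{\widetilde\mu}\widetilde N(\widetilde N-1)=f'''(1)/3$ finite under the third-moment hypothesis on $\mu$---this is the sole use of that hypothesis. Matching lower bounds come from two disjoint mechanisms: ``some single subtree visits both'' contributes $\succeq h_{AB}$ (using $\mu(\{\ge1\})>0$ and $\widetilde\mu(\{\ge1\})=\mu(0)>0$); ``two distinct children's subtrees visit $A$ and $B$ separately'' contributes $\succeq h_Ah_B$ provided the offspring distribution puts positive mass on $\{\ge2\}$---always for $\mu$ by nondegeneracy, and for $\widetilde\mu$ exactly when $\mu(\{\ge3\})>0$. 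Hence, if $\mu(\{\ge3\})>0$, both probabilities are $\asymp h_{AB}+h_Ah_B$ and the lemma follows.

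The remaining edge case is $\mu\subseteq\{0,1,2\}$, where $\widetilde\mu$ is Bernoulli on $\{0,1\}$, $\widetilde f$ is linear, and one computes directly $P(\Snake'_x\text{ vis both})=\mu(0)\,h_{AB}(x)$. It then suffices to prove $h_A(x)h_B(x)\preceq h_{AB}(x)$. Applying the pointwise bound $P(\Snake_y\text{ vis both})\succeq h_A(y)h_B(y)$ (from the two-children argument at $y$, using $\mu(2)>0$) and summing against $\theta$ gives $h_{AB}(x)\succeq\sum_y\theta(y-x)h_A(y)h_B(y)$. A Harnack-type regularity estimate, itself a consequence of the Green-function bounds of \cite{Z161}---to the effect that $\pS_A,\pS_B$ (hence $h_A,h_B$) vary by at most bounded multiplicative factors on neighborhoods of size comparable to the $\theta$-range, provided $x$ is at distance comparable to this range from $A\cup B$---yields $\sum_y\theta(y-x)h_A(y)h_B(y)\asymp h_A(x)h_B(x)$ in the bulk. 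Points $x$ close to $A\cup B$ are handled by direct inspection, using that there $\pS_A(x)$ or $\pS_B(x)$ is bounded below by a positive constant. This Harnack-type regularity step in the edge case is the main technical obstacle.
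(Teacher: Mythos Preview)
Your approach coincides with the paper's: both condition on the root's offspring and compare each side to $\pS_A(x)\pS_B(x)+P(\overline{\Snake}_x\text{ visits both})$, where $\overline{\Snake}_x$ is the snake conditioned on the root having exactly one child---your $h_A,h_B,h_{AB}$ are precisely these $\overline{\Snake}$-probabilities for $x\notin A\cup B$. You are in fact more careful than the paper in singling out the edge case where $\mu$ is supported on $\{0,1,2\}$ (so $\widetilde\mu(\{\ge2\})=0$ and the two-children lower bound for $\Snake'$ is vacuous), which the paper's ``similarly one can get \eqref{ineq-2}'' does not explicitly address; your Harnack resolution there is correct and can be made rigorous from $1-\rS_A(z)\ge 1-\mu(0)>0$ for $z\notin A$ together with the finite range and irreducibility of $\theta$.
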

\begin{proof}
In fact, we will show:
\begin{equation}\label{ineq-1}
P(\Snake_x\text{ visits both }A \& B)\asymp\pS_A(x)\pS_B(x)+P(\overline{\Snake}_x\text{ visits both }A \& B);
\end{equation}
\begin{equation}\label{ineq-2}
P(\Snake'_x\text{ visits both }A \& B)\asymp\pS_A(x)\pS_B(x)+P(\overline{\Snake}_x\text{ visits both }A \& B);
\end{equation}
where $\overline{\Snake}_x$ is the finite snake from $x$ conditioned on the initial particle having only one child.

For the upper bound of \eqref{ineq-1}, consider whether $\Snake_x$ visits $A$ via the same child of the initial particle as it visits $B$ via. If it does, this probability is at most
$$
\sum_{i=1}^{\infty}\mu(i)\cdot i P(\overline{\Snake}_x\text{ visits both }A \& B)
=\E(\mu)P(\overline{\Snake}_x\text{ visits both }A \& B).
$$
If it does not, this probability is at most
$$
\sum_{i=2}^{\infty}\mu(i)\cdot i(i-1) P(\overline{\Snake}_x\text{ visits }A)P(\overline{\Snake}_x\text{ visits }B)
\asymp \pS_A(x)\pS_B(x).
$$
Note that we use the fact that $\sum_{i=2}^{\infty}\mu(i)\cdot i(i-1)$ is bounded by the second moment of $\mu$ and $P(\overline{\Snake}_x\text{ visits }A)\asymp \pS_A(x)$, which can be proved similar to \eqref{r-p} (or see (8.5) in \cite{Z161}). Combining the last two inequalities, we get the upper bound of \eqref{ineq-1}.

For the lower bound, it is easy to see that
$$
P(\Snake_x\text{ visits both }A \& B)\geq \left(\sum_{i\geq2}\mu(i)\right)P(\overline{\Snake}_x\text{ visits }A)P(\overline{\Snake}_x\text{ visits }B)\asymp \pS_A(x)\pS_B(x);
$$
$$
P(\Snake_x\text{ visits both }A \& B)\geq \left(\sum_{i\geq1}\mu(i)\right)P(\overline{\Snake}_x\text{ visits both }A \& B).
$$
Combining these two, we can the lower bound of \eqref{ineq-1}.

Similarly one can get \eqref{ineq-2}. Note that for the upper bound, we require that $\widetilde{\mu}$ has finite second moment which is equivalent to the assumption that $\mu$ has finite third moment.
\end{proof}

\section{Proof of Wiener's Test.}
We first divide $\{x\in\R^d: 1\leq |x|<2\}$ into a finite number of small pieces with diameter less than $1/32$: $B_1,\dots,B_N$. Let $K_n^k=K_n\cap(2^nB_k)$ for any $n\in\N^+,1\leq k\leq N$. For any nonempty set $K_n^k$, we have $\diam(K_n^k)\leq2^n/32$ and $\dist(0,K_n^k)\in [2^n,2^{n+1})$. Let $V_n^k$ be the event that $\Snake_0^\infty$ visits $K_n^k$. Applying Theorem \ref{bd-infinite}, we can get:
\begin{equation}
P(V_n^k)\asymp \frac{\BCap(K_n^k)}{2^{n(d-4)}}.
\end{equation}
Since each $K_n^k$ is finite (any finite set is B-transient), we have
$$
P(\Snake_0^\infty \text{ visits } K \;i.o.)=P(V_n^k \;i.o.).
$$

When $\sum_{n=0}^{\infty}\BCap(K_n)/2^{n(d-4)}<\infty$, by monotonicity, for any $1\leq k\leq N$,
$$
\sum_{n=1}^{\infty}\frac{\BCap(K^k_n)}{2^{n(d-4)}}<\sum_{n=0}^{\infty}\frac{\BCap(K_n)}{2^{n(d-4)}}<\infty.
$$
Hence,
$$
\sum_{n=1}^{\infty}\sum_{k=1}^{N}P(V_n^k)\asymp \sum_{n=1}^{\infty}\sum_{k=1}^{N}\frac{\BCap(K^k_n)}{2^{n(d-4)}}
=\sum_{k=1}^{N}\left(\sum_{n=1}^{\infty}\frac{\BCap(K^k_n)}{2^{n(d-4)}}\right)<\infty.
$$
Then by Borel-Cantelli Lemma, almost surely, only finite $V_n^k$ occurs and hence $K$ is B-transient.

When $\sum_{n=0}^{\infty}\BCap(K_n)/2^{n(d-4)}=\infty$, by subadditivity of branching capacity (this can be seen by the limit before \eqref{bd-pp}), we have:
$$
\sum_{n=1}^{\infty}\sum_{k=1}^{N}\frac{\BCap(K^k_n)}{2^{n(d-4)}}
\geq\sum_{n=1}^{\infty}\frac{\BCap(K_n)}{2^{n(d-4)}}=\infty.
$$
Hence for some $1\leq k\leq N$, $\sum_{n=1}^{\infty}\BCap(K^k_n)/2^{n(d-4)}=\infty$. Assume
$$
\sum_{n=1}^{\infty}\BCap(K^1_n)/2^{n(d-4)}=\infty.
$$

We need the following Lemma whose proof we postpone.
\begin{lemma}\label{ineq-two}
There exists some $C>0$, such that, for any $n<m$, we have:
\begin{equation*}
P(V_n^1\cap V_m^1)\leq C P(V_n^1)P(V_m^1).
\end{equation*}
\end{lemma}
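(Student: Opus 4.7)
Write $A := K_n^1$ and $B := K_m^1$; by construction $\dist(0,A)\asymp 2^n$, $\dist(0,B)\asymp 2^m$, $\dist(A,B)\asymp 2^m$, $\diam(A)\leq 2^n/32$, $\diam(B)\leq 2^m/32$. Theorem~\ref{bd-infinite} identifies the right-hand side,
\begin{equation*}
P(V_n^1)P(V_m^1)\asymp \qS_A(0)\qS_B(0)\asymp \frac{\BCap(A)\BCap(B)}{2^{n(d-4)}2^{m(d-4)}},
\end{equation*}
which I will call the \emph{target}. The plan is to start from \eqref{two2} applied at $x=0$ to the disjoint pair $(A,B)$, then to use Lemma~\ref{3-moment} followed by \eqref{two1} to handle the residual term $P(\Snake'_z\text{ visits both})$; this produces
\begin{equation*}
P(V_n^1\cap V_m^1)\preceq \Sigma_1+\Sigma_2+\Sigma_3,
\end{equation*}
with $\Sigma_1:=\sum_z G_{A\cup B}(0,z)\pS_A(z)\qS_B(z)$, $\Sigma_2:=\sum_z G_{A\cup B}(0,z)\qS_A(z)\pS_B(z)$, and $\Sigma_3:=\sum_z G_{A\cup B}(0,z)\sum_w G_{A\cup B}(z,w)\pS_A(w)\pS_B(w)$. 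It then suffices to show that each of $\Sigma_1,\Sigma_2,\Sigma_3$ is a constant multiple of the target.

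The sums $\Sigma_1$ and $\Sigma_2$ will be handled by partitioning the $z$-sum according to proximity to $A$ and to $B$. In the region $\dist(z,B)\succeq 2^m$ one has $\qS_B(z)\preceq \BCap(B)/2^{m(d-4)}$ uniformly (Theorem~\ref{bd-infinite}), so the corresponding piece of $\Sigma_1$ is dominated by $(\BCap(B)/2^{m(d-4)})\sum_z G_A(0,z)\pS_A(z)\asymp (\BCap(B)/2^{m(d-4)})\qS_A(0)$ via \eqref{Cvv-pB} and \eqref{q-gamma}, which is precisely the target. In the complementary region $\dist(z,B)\prec 2^m$ one has $\pS_A(z)\preceq \BCap(A)/2^{m(d-2)}$ uniformly (since $\dist(z,A)\asymp 2^m$ there), and \eqref{Cvv-q} applied on $\Ball(c\cdot 2^m)\supseteq B$ gives $\sum_{z\in \Ball(c\cdot 2^m)}G_B(0,z)\qS_B(z)\preceq 2^{2m}\qS_B(0)$; simple arithmetic yields the target times $2^{(n-m)(d-4)}\leq 1$. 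The same template, with \eqref{Cvv-q} now applied on $\Ball(c\cdot 2^n)\supseteq A$, will control $\Sigma_2$; the genuinely intermediate regime with $z$ far from both $A$ and $B$ is dispatched using $G_{A\cup B}\leq g$ together with the pointwise asymptotics from Theorem~\ref{bd-infinite}, reducing to convergent sums of the form $\sum_z|z|^{-\alpha}$ that again match the target because $d\geq 5$.

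The principal obstacle is $\Sigma_3$: the naive bound $G_{A\cup B}(0,z)G_{A\cup B}(z,w)\leq g(0,z)g(z,w)\asymp |w|^{4-d}$ (from \eqref{ngreen}) is too weak for $w$ near $A$, because $|A|$ may be as large as $2^{nd}$ while $\BCap(A)$ need only be of constant order. The remedy is to split the inner $w$-sum into near-$A$, near-$B$, and elsewhere, and to apply \eqref{Cvv-p} in the two near-set pieces: $\sum_{w\in \Ball(c\cdot 2^n)}G_A(z,w)\pS_A(w)\preceq 2^{2n}\pS_A(z)$ and $\sum_{w\in \Ball(c\cdot 2^m)}G_B(z,w)\pS_B(w)\preceq 2^{2m}\pS_B(z)$. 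Combined with the uniform controls $\pS_B(w)\preceq \BCap(B)/2^{m(d-2)}$ for $w$ near $A$ and $\pS_A(w)\preceq \BCap(A)/2^{m(d-2)}$ for $w$ near $B$, the two near-set pieces of the inner sum collapse to constants times $\pS_A(z)$ or $\pS_B(z)$, so the outer sum reduces to $\sum_z G_{A\cup B}(0,z)\pS_A(z)\asymp \qS_A(0)$ (and likewise for $\pS_B$), which again hits the target up to a factor $2^{2(n-m)}\leq 1$ or $2^{(n-m)(d-4)}\leq 1$. For the remaining ``elsewhere'' piece the naive $g\cdot g$ bound is finally sharp enough: it reduces to $\BCap(A)\BCap(B)\sum_w |w|^{4-d}\dist(w,A)^{-(d-2)}\dist(w,B)^{-(d-2)}$, a convergent sum (for all $d\geq 5$) that also matches the target.
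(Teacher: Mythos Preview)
Your strategy is exactly the paper's: apply \eqref{two2}, then Lemma~\ref{3-moment} and \eqref{two1}, to reduce to the three sums $\Sigma_1,\Sigma_2,\Sigma_3$, and control each by splitting the summation domain and invoking Lemma~\ref{CVV}. The paper organizes $\Sigma_3$ slightly differently (it first proves the pointwise bound \eqref{pAB} on $P(\Snake_z\text{ visits both }A\,\&\,B)$ and then sums over $z$), but the ingredients are the same.

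There is, however, one genuine slip. Your ``near-$B$'' region is written as $\Ball(c\cdot 2^m)$, which in the paper's convention is centered at the origin and therefore contains $A$. On that region the claimed uniform bound $\pS_A(w)\preceq \BCap(A)/2^{m(d-2)}$ is false: for $w$ outside your near-$A$ ball you only get $\dist(w,A)\succeq 2^n$, hence $\pS_A(w)\preceq \BCap(A)/2^{n(d-2)}$. Feeding this into your scheme, the near-$B$ contribution to $\Sigma_3$ becomes $2^{2m}\cdot\BCap(A)/2^{n(d-2)}\cdot\qS_B(0)$, which exceeds the target $\qS_A(0)\qS_B(0)$ by a factor $2^{2(m-n)}$. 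The fix is precisely what the paper does: take ``near-$B$'' to be $\widehat{B}=b+\Ball(2^m/8)$ for some $b\in B$. This ball contains $B$, so \eqref{Cvv-p} still applies (after a translation), and now every $w\in\widehat{B}$ satisfies $\dist(w,A)\asymp 2^m$, so $\pS_A(w)\asymp\pS_A(b)\asymp\BCap(A)/2^{m(d-2)}$ as you wanted. With that adjustment (and the symmetric choice $\widehat{A}=a+\Ball(2^n/8)$) your argument and the paper's coincide.
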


Let $I_n=\sum_{i=1}^n \mathbf{1}_{V_i^1}$ and $F=\mathbf{1}_{\{I_n\geq  \E(I_n)/2\}}$. By the lemma above, we have:
$$
\E(I_n^2)\leq C(\E(I_n))^2.
$$
$$
\E (F I_n)=\E I_n-\E(I_n\mathbf{1}_{\{I_n<\E(I_n)/2\}}) \geq \E(I_n)/2.
$$
Hence,
\begin{multline*}
P(I_n\geq\E(I_n)/2)=\E(F)=\E(F^2)\\
\geq(\E (F I_n))^2/\E(I_n^2)
\geq (\E(I_n)/2)^2/C(\E(I_n))^2=1/(4C).
\end{multline*}
Since $\E I_n\rightarrow \infty$, let $n\rightarrow \infty$, we get
$$
P(I_n=\infty)\geq 1/(4C).
$$
By Proposition \ref{recurrent}, we get that $K$ is B-recurrent.

\section{Proof of Lemma \ref{ineq-two}.}
Write $A=K_n^1,B=K_m^1$ and $M=2^m$. Without loss of generality, assume $A,B\neq \emptyset$. We know
$$
\diam(A)\leq 2^n/32, \diam(B)\leq 2^m/32.
$$
Fix any $a\in A$ and $b\in B$. Let $\widehat{A}=a+\Ball(2^n/8)$ and $\widehat{B}=b+\Ball(2^m/8)$. Then we have
\begin{equation}\label{cc}
\dist(A,\widehat{A}^c)\asymp\dist(0,\widehat{A})\asymp 2^n; \dist(B,\widehat{B}^c)\asymp\dist(0,\widehat{B})\asymp 2^m; \dist(a,b)\asymp\dist(\widehat{A},\widehat{B})\asymp 2^m.
\end{equation}
We need to show:
\begin{equation}
P(\Snake_0^\infty \text{ visits both }A\, \&\, B)\preceq \qS_A(0)\qS_B(0).
\end{equation}
In the proof, we will repeatedly use \eqref{bd-pp}, Theorem \ref{bd-infinite}, \eqref{q=cvv-p} and Lemma \ref{CVV} without mention.
Since (see \eqref{two2} and \eqref{adjoint})
\begin{multline*}
P(\Snake_0^\infty \text{ visits both }A\, \&\, B)\preceq\\
\sum_{z\in\Z^d}G_{A\cup B}(0,z)\cdot
\left(\pS_A(z)\qS_B(z)+\pS_B(z)\qS_A(z)+P(\Snake_z\text{ visits both }A\, \&\, B)\right),
\end{multline*}
it suffices to show:
\begin{equation}\label{c1}
\sum_{z\in\Z^d}G_{A\cup B}(0,z)\pS_A(z)\qS_B(z)\preceq \qS_A(0)\qS_B(0);
\end{equation}
\begin{equation}\label{c2}
\sum_{z\in\Z^d}G_{A\cup B}(0,z)\pS_B(z)\qS_A(z)\preceq \qS_A(0)\qS_B(0);
\end{equation}
\begin{equation}\label{c3}
\sum_{z\in\Z^d}G_{A\cup B}(0,z)P(\Snake_z\text{ visits both }A\, \&\, B)\preceq \qS_A(0)\qS_B(0).
\end{equation}
Note that by monotonicity, $G_{A\cup B}(x,y)\leq \min\{G_A(x,y),G_B(x,y)\}$.
For \eqref{c1}, we have:
\begin{align*}
\sum_{z\in\Z^d}&G_{A\cup B}(0,z)\pS_A(z)\qS_B(z)\\
=&\sum_{z\in\widehat{B}}G_{A\cup B}(0,z)\pS_A(z)\qS_B(z)+
\sum_{z\in \widehat{B}^c}G_{A\cup B}(0,z)\pS_A(z)\qS_B(z)\\
\preceq &\sum_{z\in\widehat{B}}G_{B}(0,z)\pS_A(b)\qS_B(z)+
\sum_{z\in \widehat{B}^c}G_{A}(0,z)\pS_A(z)\qS_B(0)\\
\preceq &(\diam (\widehat{B}))^2\qS_B(0)\pS_A(b)+\qS_A(0)\qS_B(0)\preceq\qS_A(0)\qS_B(0).\\
\end{align*}
Similarly one can show \eqref{c2}.

We just need to show \eqref{c3}. We first show that:
\begin{equation}\label{pAB}
P(\Snake_z\text{ visits both }A\, \&\, B)\preceq \left\{
\begin{array}{ll}
\pS_A(b)\qS_B(z)+\pS_B(0)\qS_A(z);& \text{when }z\in \Ball(4M);\\
\pS_A(z)\qS_B(a);& \text{when }z\notin \Ball(4M).\\
\end{array}\right.
\end{equation}

By \eqref{two1}, we need to estimate:
$$
\sum_{w\in\Z^d}G_{A\cup B}(z,w)\pS_A(w)\pS_B(w).
$$
When $z\in\Ball(4M)$, we have
\begin{align*}
\sum_{w\in\Z^d}&G_{A\cup B}(z,w)\pS_A(w)\pS_B(w)\\
&=\sum_{w\in\widehat{B}}G_{A\cup B}(z,w)\pS_A(w)\pS_B(w)+\sum_{w\in\widehat{B}^c}G_{A\cup B}(z,w)\pS_A(w)\pS_B(w)\\
&\preceq \sum_{w\in\widehat{B}}G_{A}(z,w)\pS_A(b)\pS_B(w)+\sum_{w\in\widehat{B}^c}G_{B}(z,w)\pS_A(w)\pS_B(0)\\
&\preceq \pS_A(b)\qS_B(z)+\pS_B(0)\qS_A(z).\\
\end{align*}
When $z\notin\Ball(4M)$, let $\widehat{C}=\Ball(3M)$. We divide the sum into three parts:
$$
\sum_{w\in \widehat{B}},\;\sum_{w\in \widehat{C}\setminus \widehat{B}},\;\sum_{w\in \widehat{C}^c}.
$$
\begin{align*}
\sum_{w\in \widehat{B}}& G_{A\cup B}(z,w)\pS_A(w)\pS_B(w)
\preceq\sum_{w\in\widehat{B}}G_{B}(z,w)\pS_A(b)\pS_B(w)\\
&\preceq (\diam{\widehat{B}})^2\pS_B(z)\pS_A(b)\asymp(\dist(a,b))^2\frac{\BCap(A)\BCap(B)}{(\dist(z,B))^{d-2}(\dist(a,b)^{d-2})}\\
&\asymp\pS_A(z)\qS_B(a).
\end{align*}
\begin{align*}
\sum_{w\in \widehat{C}\setminus\widehat{B}}& G_{A\cup B}(z,w)\pS_A(w)\pS_B(w)
\preceq\sum_{w\in\widehat{C}\setminus\widehat{B}}G_{A}(z,w)\pS_A(w)\pS_B(a)\\
&\preceq (\diam{\widehat{C}})^2\pS_A(z)\pS_B(a)\asymp\pS_A(z)\qS_B(a).
\end{align*}
\begin{align*}
\sum_{w\in \widehat{C}^c}&G_{A\cup B}(z,w)\pS_A(w)\pS_B(w)
\preceq\sum_{w\in \widehat{C}^c}g(z,w)\frac{\BCap(A)\BCap(B)}{|w-a|^{d-2}|w-b|^{d-2}}\\
\asymp&\BCap(A)\BCap(B)\sum_{w\in \widehat{C}^c}\frac{1}{|w-z|^{d-2}|w-a|^{2d-4}}\\
\stackrel{(\ast)}{\preceq}& \frac{\BCap(A)\BCap(B)}{|z-a|^{d-2}|b-a|^{d-4}}\asymp \pS_A(z)\qS_B(a).\\
\end{align*}
Combine all three above, we get \eqref{pAB}. Note that for $(\ast)$, we use:
\begin{align*}
\sum_{w\in \widehat{C}^c}&\frac{1}{|w-z|^{d-2}|w-a|^{2d-4}}\\
\leq&\sum_{\|w-z\|\leq \|z\|/8}\frac{1}{|w-z|^{d-2}|w-a|^{2d-4}}
+\sum_{\|w-z\|\geq \|z\|/8, w\in\widehat{C}^c}\frac{1}{|w-z|^{d-2}|w-a|^{2d-4}}\\
\preceq&\sum_{\|w-z\|\leq \|z\|/8}\frac{1}{|w-z|^{d-2}|z-a|^{2d-4}}
+\sum_{\|w-z\|\geq \|z\|/8, w\in\widehat{C}^c}\frac{1}{|z|^{d-2}|w-a|^{2d-4}}\\
\preceq&\frac{|z|^2}{|z-a|^{2d-4}}+\frac{1}{|z|^{d-2}}\sum_{w\in\widehat{C}^c}\frac{1}{|w-a|^{2d-4}}
\asymp\frac{1}{|z|^{2d-6}}+\frac{1}{|z|^{d-2}}\sum_{n\geq 3M}\frac{n^{d-1}}{n^{2d-4}}\\
\asymp&\frac{1}{|z|^{2d-6}}+\frac{1}{|z|^{d-2}}\frac{1}{M^{d-4}}
\asymp\frac{1}{|z|^{d-2}}\frac{1}{M^{d-4}}\asymp\frac{1}{|z-a|^{d-2}|b-a|^{d-4}}.\\
\end{align*}

Hence,
\begin{align*}
&\sum_{z\in \Z^d}G_{A\cup B}(0,z)P(\Snake_z\text{ visits both }A\, \&\, B)\\
\preceq& \sum_{z\in \Ball(4M)}G_{A\cup B}(0,z)(\pS_A(b)\qS_B(z)+\pS_B(0)\qS_A(z))+\sum_{z\in \Ball(4M)^c}G_{A}(0,z)\pS_A(z)\qS_B(a)\\
\preceq& M^2\pS_A(b)\qS_B(0)+M^2\pS_B(0)\qS_A(0)+\qS_A(0)\qS_B(a)\asymp\qS_B(0)\qS_A(0).\\
\end{align*}
This is just \eqref{c3} and we finish the proof.

\section*{Acknowledgements}

We thank Professor Omer Angel for valuable comments on earlier versions of this paper.


\begin{thebibliography}{AAAAAa}

\bibitem[1]{A91}
D. Aldous.
The continuum random tree II: an overview. Stochastic Analysis (Pro., Durham, 1990), 23-70, London Math. Soc. Lecture Note Ser. 167 Cambridge Univ. Press, Cambridge, 1991.


\bibitem[2]{BC12}
I. Benjamini and N. Curien.
Recurrence of the $\Z^d$-valued infinite snake via unimodularity.
Electron. Commun. Probab., 17, no. 1, 10pp, 2012.


\bibitem[3]{L91}
G. F. Lawler.
Intersections of random walks.
Birkh\"auser Boston Inc., 1991.


\bibitem[4]{LL10}
G. F. Lawler and V. Limic.
Random walk: a modern introduction.
Cambridge University Press, 2010.



\bibitem[5]{S76}
F. Spitzer.
Principles of random walk.
Springer-Verlag, 2nd ed, 2001.



\bibitem[6]{Z161}
Q. Zhu.
On the critical branching random walk I: Branching capacity and visiting probability.
Preprint, arXiv:1611.10324.

\end{thebibliography}
\end{document}